\documentclass[11pt,letterpaper]{article}

\usepackage[margin=1in]{geometry}                                                    
\usepackage{amsmath,amsfonts,amssymb,color}
\usepackage{tikz}
\usepackage{pmat}
\usepackage{amsthm}
\usepackage{empheq}

\newtheorem{problem}{Problem}

\newtheorem{definition}{Definition}
\newtheorem{theorem}{Theorem}
\newtheorem{lemma}{Lemma}
\newtheorem{proposition}{Proposition}

\newtheorem{remark}{Remark}
\newtheorem{assumption}{Assumption}

\usepackage{epsfig}

\usepackage{algorithm}
\usepackage{algorithmic}
\usepackage{epsfig}

\usepackage{array}
\usepackage{multirow}
\usepackage{epstopdf}
\usepackage{tikz}
\usepackage{relsize}
\usepackage[pdfpagelabels,pdfpagemode=None,breaklinks=true]{hyperref}\hypersetup{
colorlinks = true,
citecolor = blue}
\usetikzlibrary{shapes,snakes}

\usepackage{cite}

\DeclareMathAlphabet{\mathcal}{OMS}{cmsy}{m}{n} 

\title{\LARGE \bf
Distributed Functional Observers for LTI Systems
}


\author{Aritra~Mitra~and~Shreyas Sundaram
\thanks{This research was supported in part by NSF grant CMMI-1635014. The authors are with the School of Electrical and Computer Engineering at Purdue University. Email: {\tt \{mitra14,sundara2\}@purdue.edu}.}%
}

\begin{document}
\maketitle

\begin{abstract}
We study the problem of designing distributed functional observers for LTI systems. Specifically, we consider a setting consisting of a state vector that evolves over time according to a dynamical process.  A set of nodes distributed over a communication network wish to collaboratively estimate certain functions of the state.  We first show that classical existence conditions for the design of centralized functional observers do not directly translate to the distributed setting, due to the coupling that exists between the dynamics of the functions of interest and the diverse measurements at the various nodes. Accordingly, we design transformations that reveal such couplings and identify portions of the corresponding dynamics that are locally detectable at each sensor node. We provide sufficient conditions on the network, along with state estimate update and exchange rules for each node, that guarantee asymptotic reconstruction of the functions at each sensor node. 
\end{abstract}
\section{Introduction}
Consider the discrete-time linear time-invariant dynamical system
\begin{equation}
\mathbf{x}[k+1] = \mathbf{Ax}[k],
\label{eqn:plant}
\end{equation}
where $k \in \mathbb{N}$ is the discrete-time index, $\mathbf{x}[k] \in {\mathbb{R}}^n$ is the state vector and  $\mathbf{A} \in {\mathbb{R}}^{ n \times n} $ is the system matrix. The state of the system is monitored by a network of $N$ sensors, each of which receives a partial measurement of the state at every time-step. Specifically, the $i$-th sensor has access to a measurement of the state, given by
\begin{equation}
\mathbf{y}_{i}[k]=\mathbf{C}_i\mathbf{x}[k],
\label{eqn:Obsmodel}
\end{equation}
where $\mathbf{y}_{i}[k] \in {\mathbb{R}}^{r_i}$ and $\mathbf{C}_i \in {\mathbb{R}}^{r_i \times n}$. We use $\mathbf{y}[k]={\begin{bmatrix}\mathbf{y}^T_{1}[k] & \cdots & \mathbf{y}^T_{N}[k]\end{bmatrix}}^T$ to represent the collective measurement vector, and $\mathbf{C}={\begin{bmatrix}\mathbf{C}^T_{1} & \cdots & \mathbf{C}^T_{N}\end{bmatrix}}^T$ to denote the collection of the sensor observation matrices. These sensors are represented as nodes of an underlying directed communication graph which governs the information flow between the sensors.  
Each node is capable of exchanging information with its neighbors and performing computational tasks. The goal of each node is to estimate $\boldsymbol{\psi}[k]$, where 
\begin{equation}
\boldsymbol{\psi}[k]=\mathbf{L}\mathbf{x}[k].
\label{eq:funcdef}
\end{equation}
Here, $\mathbf{L} \in \mathbb{R}^{r \times n}$ is a full row-rank matrix (without loss of generality); hence $\boldsymbol{\psi}[k]$ represents $r$ linearly independent functions of the state.\footnote{When $\mathbf{L}$ is the identity matrix (or more generally a square non-singular matrix), the corresponding problem becomes the well-explored distributed state estimation problem. For work on distributed Kalman filtering, see \cite{olfati1,olfati3,infinite1}, and for literature on distributed state observers refer to \cite{Khanobs2,park,wang,mitra2016}. The problem studied in this paper is in fact a generalization of the distributed state estimation problem.} While there is a rich body of literature that looks at the centralized functional observer design problem (see\cite{darouach,trinh}), we are unaware of any work that investigates the distributed counterpart based on the generic model considered in this paper. In \cite{funcdist}, the authors develop a partially distributed functional observer scheme for coupled interconnected LTI systems where each sub-system maintains an observer for estimating functions of the state corresponding to that particular sub-system. However, the model and problem formulation in \cite{funcdist} differs from the one considered in this paper. A key point of difference is that in \cite{funcdist}, the impact of the underlying communication graph does not play a role in the design strategy, whereas our methodology focuses on analyzing the relationship between the system dynamics and the network topology. 

The main contribution of this paper is a distributed algorithm that guarantees asymptotic reconstruction of $\boldsymbol{\psi}[k]$ at each sensor node under certain conditions on the system dynamics and network topology.

\section{System Model}
\label{sec:sysmod}
\subsection{Notation}
\label{sec:notation}
A directed graph is denoted by $\mathcal{G} =(\mathcal{V},\mathcal{E})$, where $\mathcal{V} =\{1, \cdots, N\}$ is the set of nodes and $\mathcal{E} \subseteq \mathcal{V} \times \mathcal{V} $ represents the edges. An edge from node $j$ to node $i$, denoted by $(j,i)$, implies that node $j$ can transmit information to node $i$. The neighborhood of the $i$-th node is defined as $\mathcal{N}_i \triangleq \{i\} \cup \{j\,|\,(j,i) \in \mathcal{E} \}.$  The notation $|\mathcal{V}|$ is used to denote the cardinality of a set $\mathcal{V}$. For a set $\{\mathbf{A}_1, \cdots, \mathbf{A}_n\}$ of matrices, we use $ 
diag(\mathbf{A}_1, \cdots, \mathbf{A}_n)$ to refer to a block-diagonal matrix with matrix $\mathbf{A}_i$ as the $i$-th block-diagonal entry. For a set $\mathcal{S}=\{s_1,\cdots, s_p\} \subseteq \{1,\cdots, N\}$, and a matrix $\mathbf{C}={\begin{bmatrix} \mathbf{C}^T_{1} & \cdots &  \mathbf{C}^T_{N}\end{bmatrix} }^T$, we define $\mathbf{C}_{\mathcal{S}}\triangleq {\begin{bmatrix}\mathbf{C}^{T}_{s_1} & \cdots & \mathbf{C}^{T}_{s_p}\end{bmatrix}}^T$. Given a matrix $\mathbf{A}$, we use $\mathcal{R}(\mathbf{A})$ to denote the row-space of $\mathbf{A}$, $sp(\mathbf{A})$ to denote the spectrum of $\mathbf{A}$ and $\mathbf{A}^{\dagger}$ to refer to its Moore Penrose inverse. We use $\mathbf{I}_{r}$ to indicate an identity matrix of dimension $r \times r$.

\subsection{Problem Formulation}
\label{sec:prob_form}
Consider the LTI system given by \eqref{eqn:plant}, the measurement model specified by \eqref{eqn:Obsmodel}, the functions of interest described by \eqref{eq:funcdef}, and a predefined directed communication graph $\mathcal{G=(V,E)}$ where $\mathcal{V}$ represents the set of $N$ nodes (or sensors). Let $\hat{\boldsymbol{\psi}}_i[k]$ denote the estimate of $\boldsymbol{\psi}[k]$ maintained by node $i$, which it updates at each time-step $k$ based on information received from its neighbors and its local measurements (if any). Given this setting, the problem studied in this paper is formally stated as follows.
\begin{problem} [Distributed Functional  Estimation Problem]
For the model specified by equations \eqref{eqn:plant}, \eqref{eqn:Obsmodel}, \eqref{eq:funcdef} and a predefined communication graph $\mathcal{G}$, design a distributed algorithm that  achieves $\lim_{k\to\infty} ||\hat{\boldsymbol{\psi}}_i[k]-\boldsymbol{\psi}[k]||=0, \forall i \in \{1, \cdots, N\}$.
\end{problem}
\begin{remark}
As long the system is globally detectable, i.e., the pair $(\mathbf{A,C})$ is detectable, and the communication graph $\mathcal{G}$ is strongly connected, a trivial way to solve Problem $1$ is to reconstruct the entire state $\mathbf{x}[k]$ at every sensor node based on any of the existing distributed state estimation techniques in \cite{park,wang,mitra2016}.
Our goal in this paper will be to design observers that are in general of order \footnote{By the order of an observer at a given sensor node, we refer to the dimension of the portion of the state that is dynamically estimated at that node, i.e., the portion that is not obtained directly from the measurements of the node under consideration.}  smaller than the dimension of the state $\mathbf{x}[k]$. For more on this issue, refer to Remark \ref{rem:order}.
\end{remark}

A distributed algorithm that solves Problem $1$ will be called a \textit{distributed functional observer}. 
Note that in general it may not be possible for a given node $i$ to estimate $\boldsymbol{\psi}[k]$ by solely relying on its own measurements,\footnote{This is precisely the case when $\boldsymbol{\psi}[k]$ is a linear function of some states that are undetectable with respect to the measurements of node $i$; for related notions of `Functional Observability', see \cite{func1,func2}.} thereby dictating the need to exchange information with its neighbors. However, this exchange of information is restricted by the underlying communication graph $\mathcal{G}$. In addition to the aforementioned challenges, in the subsequent section we show that existing results/techniques for the centralized version of the problem are not directly applicable to the problem under consideration. 
\section{Motivation}
\label{sec:motivation}
The purpose of this section is to illustrate that classical existence conditions for the design of centralized functional observers (of a given order) do not generally hold in a distributed setting, and in the process, motivate our present work. To this end, we first recall the following necessary and sufficient conditions set forth by Darouach \cite{darouach} for the existence of a centralized functional observer of order $r$, where $r=\textrm{rank}\hspace{1mm}\mathbf{L}$:
\begin{itemize}
\item[(i)] \begin{equation}
\textrm{rank}\begin{bmatrix}\mathbf{LA}\\ \mathbf{CA} \\ \mathbf{C} \\ \mathbf{L} \end{bmatrix}=\textrm{rank}\begin{bmatrix}\mathbf{CA}\\\mathbf{C}\\\mathbf{L}\end{bmatrix},
\label{eq:Darouach_rank}
\end{equation} 

\item[(ii)] 
\begin{equation}
\textrm{rank}\begin{bmatrix}s\mathbf{L}-\mathbf{L}\mathbf{A}\\
\mathbf{CA}\\
\mathbf{C}
\end{bmatrix}
=\textrm{rank}\begin{bmatrix}\mathbf{CA}\\\mathbf{C}\\\mathbf{L}\end{bmatrix}, \hspace{2mm}\forall s\in \mathbb{C}, \hspace{2mm} |s| \geq 1.
\label{eq:Darouach_detect}
\end{equation}
\end{itemize} 
Next, consider the following model, where the system is monitored by a network of nodes, as depicted by Figure \ref{fig:example}:
\begin{eqnarray}
\underbrace{\begin{bmatrix}{x}^{(1)}[k+1] \\{x}^{(2)}[k+1]\end{bmatrix}}_{\mathbf{x}[k+1]}=\underbrace{\begin{bmatrix}\frac{1}{2}&2\\0&3\end{bmatrix}}_{\mathbf{A}}\underbrace{\begin{bmatrix}x^{(1)}[k]\\{x}^{(2)}[k]\end{bmatrix}}_{\mathbf{x}[k]},\nonumber\\
\mathbf{L}=\begin{bmatrix}1&0\end{bmatrix}, \mathbf{C}_1=\begin{bmatrix}0&1\end{bmatrix}, \mathbf{C}_2=\mathbf{C}_3=0.
\label{eq:example_model}
\end{eqnarray}
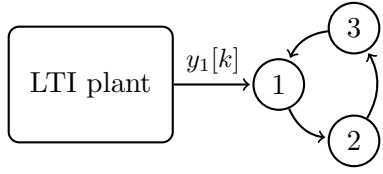
\begin{figure}[t]
\begin{center}
\begin{tikzpicture}
[->,shorten >=1pt,scale=.5, minimum size=10pt, auto=center, node distance=3cm,
  thick, every node/.style={circle, draw=black, thick},]
\tikzstyle{block} = [rectangle, draw, 
    text width=5em, text centered, rounded corners, minimum height=4em];
\node [block]  at (0,3.5) (plant) {LTI plant};

\node [circle, draw](n1) at (5,3.5)  (1)  {1};
\node [circle, draw](n2) at (7,2)   (2)  {2};
\node [circle, draw](n3) at (7,5)   (3)  {3};

\path[every node/.style={font=\sffamily\small}]
(plant)
             edge[] node [above] {$y_1[k]$} (1)
             
    (1)
        edge [bend right] node [] {} (2)
        
    (2)
        edge [bend right] node [] {} (3)
        
(3)
        edge [bend right] node [] {} (1);      
\end{tikzpicture}
\end{center}
\caption{Example for illustrating Proposition \ref{prop:example}.}
\label{fig:example}
\end{figure}
The objective is to asymptotically estimate $x^{(1)}[k]$ at each of the three sensor nodes. It is easy to verify that the necessary and sufficient conditions (equations \eqref{eq:Darouach_rank} and \eqref{eq:Darouach_detect})  for the existence of a centralized $1$st order functional observer are satisfied by the model \eqref{eq:example_model} with $\mathbf{C}={\begin{bmatrix} \mathbf{C}^T_1&\mathbf{C}^T_2&\mathbf{C}^T_3\end{bmatrix}}^T$. At this point, the natural inclination is to ascertain whether it is possible for each sensor node to asymptotically estimate $x^{(1)}[k]$ via 1st order estimators. To formally answer this question, we need to impart some structure to the distributed observers under consideration. To this end, consider distributed observers of the form\footnote{The choice of this observer structure is inspired by the fact that standard distributed state observers existing in literature are essentially of this form.}
\begin{equation}
{\hat{x}}^{(1)}_{i}[k+1]=\alpha_i \sum_{j \in\mathcal{N}_i}w_{ij}{\hat{x}}^{(1)}_{j}[k]+\beta_i\sum_{j\in\mathcal{N}_i}y_j[k],
\label{eqn:distobsexample}
\end{equation}
where ${\hat{x}}^{(1)}_{i}[k]$ is the estimate of state $x^{(1)}[k]$ maintained by node $i$ at time-step $k$, and $\alpha_i$, $\beta_i$, $w_{ij}$ are free design parameters at node $i$. Moreover, the weights $w_{ij}$ are non-negative and satisfy $\sum_{j\in\mathcal{N}_i}w_{ij}=1, i\in\{1,2,3\}$. We have the following simple result.
\begin{proposition}
For the model given by \eqref{eq:example_model}, and the corresponding network depicted by Figure \ref{fig:example}, it is impossible for node $3$ to  estimate the function $\mathbf{Lx}[k]=x^{(1)}[k]$ using a 1st order observer that has structure given by \eqref{eqn:distobsexample}.
\label{prop:example}
\end{proposition}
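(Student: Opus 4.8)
The plan is to trace through the dynamics of the estimation error at node $3$ and show that it cannot be driven to zero for all initial conditions of the plant. First I would note that since $\mathbf{C}_2 = \mathbf{C}_3 = 0$, and examining the network in Figure~\ref{fig:example}, node $3$ receives information only from node $2$ (its only non-self in-neighbor), which in turn receives information only from node $1$. Thus the measurement term $\beta_3 \sum_{j \in \mathcal{N}_3} y_j[k]$ vanishes identically (node $3$'s neighborhood is $\{2,3\}$ and both have zero $\mathbf{C}$ matrices), so the update at node $3$ reduces to $\hat{x}^{(1)}_3[k+1] = \alpha_3 \sum_{j \in \mathcal{N}_3} w_{3j}\hat{x}^{(1)}_j[k] = \alpha_3\left(w_{32}\hat{x}^{(1)}_2[k] + w_{33}\hat{x}^{(1)}_3[k]\right)$, a purely internal recursion fed only by node $2$'s estimate.

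Next I would set up the collective estimate vector $\hat{\mathbf{x}}^{(1)}[k] = \begin{bmatrix} \hat{x}^{(1)}_1[k] & \hat{x}^{(1)}_2[k] & \hat{x}^{(1)}_3[k]\end{bmatrix}^T$ together with the plant state, forming an augmented linear system whose dynamics are governed by a block matrix built from $\mathbf{A}$, the weights $w_{ij}$, and the gains $\alpha_i, \beta_i$. The key observation is structural: because the coupling from the plant into the estimators enters \emph{only} through $y_1[k] = x^{(2)}[k]$ and only at those nodes whose neighborhoods contain node $1$, the estimator at node $3$ has no direct access to any signal carrying information about $x^{(1)}[k]$. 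I would then argue that the error $e_3[k] = \hat{x}^{(1)}_3[k] - x^{(1)}[k]$ satisfies a recursion of the form $e_3[k+1] = \alpha_3 w_{33} e_3[k] + (\text{terms involving } e_2[k], \text{ states, and the fixed value } \tfrac12 x^{(1)}[k] + 2 x^{(2)}[k] - \alpha_3 w_{32} x^{(1)}[k] - \alpha_3 w_{33} x^{(1)}[k])$; matching the forced/unforced parts and requiring this to decay for \emph{all} initial plant states $\mathbf{x}[0]$ will force algebraic constraints on $\alpha_3, w_{32}, w_{33}$ that, combined with the stochasticity constraint $w_{32} + w_{33} = 1$ and the requirement that the propagation matrix of the full error system be stable (eigenvalues strictly inside the unit disk, accounting for the unstable mode $3 \in sp(\mathbf{A})$ associated with $x^{(2)}$), cannot be simultaneously met.

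The cleanest route is probably to argue by eigenstructure: any observer of the form \eqref{eqn:distobsexample} makes the joint $(\mathbf{x}, \hat{\mathbf{x}}^{(1)})$ system linear and time-invariant, so $\lim_{k\to\infty}(\hat{x}^{(1)}_3[k] - x^{(1)}[k]) = 0$ for every $\mathbf{x}[0]$ forces $\begin{bmatrix}0 & 0 & 1\end{bmatrix} - \begin{bmatrix}1 & 0\end{bmatrix}$ (acting on the respective blocks) to lie in a suitable stable/output-zeroing subspace; I would compute the relevant invariant subspace explicitly for this $2$-dimensional plant and $3$-dimensional estimator and exhibit the contradiction. I expect the main obstacle to be bookkeeping: carefully identifying which measurement sums are actually nonzero at each node given the sparse $\mathbf{C}_i$ and the specific cycle graph, and then organizing the invariant-subspace / forced-response computation so that the impossibility is manifest rather than buried in a $5\times 5$ characteristic polynomial. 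Once the observation that node $3$'s recursion is completely decoupled from $x^{(1)}$'s influence is made precise, the contradiction with asymptotic tracking of the unstable-free but nontrivially-evolving coordinate $x^{(1)}[k]$ (note $x^{(1)}[k+1] = \tfrac12 x^{(1)}[k] + 2x^{(2)}[k]$ depends on $x^{(2)}[k]$, which grows like $3^k$) should follow quickly.
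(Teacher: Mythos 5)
You have correctly isolated the structural point that drives the paper's argument: node $3$'s neighborhood is $\{2,3\}$, both with zero measurement matrices, so no measurement feed is available at node $3$ and the coupling term $\beta y_1[k]$ (with $\beta=2$) from $x^{(1)}[k+1]=\tfrac12 x^{(1)}[k]+2y_1[k]$ enters its error recursion uncancelled; in the paper's notation this is the statement that the third entry of $\mathbf{B}_2$ equals $-\beta\neq 0$. However, your proof stops exactly where the work begins: you assert that the algebraic constraints on $\alpha_3,w_{32},w_{33}$ ``cannot be simultaneously met'' and that the contradiction ``should follow quickly,'' but you never derive the constraints or exhibit the contradiction, so as written this is a plan rather than a proof.

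More importantly, the route you propose---exact forced-response/invariant-subspace matching against the true plant \eqref{eq:example_model}---does not deliver the contradiction you expect. Writing the error dynamics as $\mathbf{e}[k+1]=\mathbf{M}\mathbf{e}[k]+\begin{bmatrix}\mathbf{B}_1 & \mathbf{B}_2\end{bmatrix}\mathbf{x}[k]$ and carrying out the Sylvester/regulator computation you sketch, the requirement ``$\mathbf{e}[k]\to\mathbf{0}$ for every plant and observer initial condition'' is equivalent to: $\mathbf{M}$ Schur and $\begin{bmatrix}\mathbf{B}_1 & \mathbf{B}_2\end{bmatrix}v=\mathbf{0}$ only along the unstable eigenvector $v=\begin{bmatrix}4 & 5\end{bmatrix}^{T}$ of $\mathbf{A}$ (the forcing along the stable mode decays on its own). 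These conditions \emph{are} satisfiable within the structure \eqref{eqn:distobsexample}: for instance $\alpha_3=3$, $w_{32}=1$, $w_{33}=0$, $\alpha_1=\alpha_2=0$, $\beta_1=\beta_2=\tfrac{12}{5}$ makes $\mathbf{M}$ nilpotent and annihilates $v$, because asymptotically $x^{(1)}[k]$ and $y_1[k]=x^{(2)}[k]$ become proportional along the unstable mode, so the coupling can be cancelled at node $3$ without it ever seeing $y_1$. Thus, followed to completion, your argument produces a parameter choice rather than an impossibility. The paper's proof obtains the impossibility by imposing the stronger requirement that $\mathbf{e}[k]\to\mathbf{0}$ regardless of the trajectories of $x^{(1)}[k]$ and $x^{(2)}[k]$ treated as independent signals; that forces $\mathbf{B}_1=\mathbf{0}$ and $\mathbf{B}_2=\mathbf{0}$ separately, and the third entry of $\mathbf{B}_2$ being $-\beta=-2$ finishes the argument immediately. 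To prove the stated proposition along your lines you must either adopt (and state explicitly) that trajectory-independent notion of convergence, or find a different obstruction; the per-plant eigenstructure computation you outline is precisely the step at which your version breaks down. (A small side remark: $x^{(1)}[k]$ is not an ``unstable-free'' coordinate---through the coupling it grows like $3^k$ along the unstable eigenvector---which is exactly what makes the cancellation above possible.)
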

\begin{proof} Let $e_i[k]={\hat{x}}^{(1)}_{i}[k]-x^{(1)}[k]$ denote the error in estimation of state $x^{(1)}[k]$ at node $i$. Based on \eqref{eq:example_model}, we have $
x^{(1)}[k+1]=\alpha x^{(1)}[k]+\beta y_1[k]
$
where $\alpha=\frac{1}{2}$, $\beta=2$ and $y_1[k]=x^{(2)}[k]$. Then, using \eqref{eqn:distobsexample} and some straightforward algebra, we obtain the following error dynamics:
\begin{eqnarray}
\underbrace{\begin{bmatrix}e_1[k+1]\\e_2[k+1]\\e_3[k+1]\end{bmatrix}}_{\mathbf{e}[k+1]}&=\underbrace{\begin{bmatrix}\alpha_1w_{11}&0&\alpha_1w_{13}\\\alpha_2w_{21}&\alpha_2w_{22}&0\\0&\alpha_3w_{32}&\alpha_3w_{33}\end{bmatrix}}_{\mathbf{M}}\underbrace{\begin{bmatrix}e_1[k]\\e_2[k]\\e_3[k]\end{bmatrix}}_{\mathbf{e}[k]}\\
&+\underbrace{\begin{bmatrix}(\alpha_1-\alpha)\\(\alpha_2-\alpha)\\(\alpha_3-\alpha)
\end{bmatrix}}_{\mathbf{B}_1}x^{(1)}[k]+\underbrace{\begin{bmatrix}(\beta_1-\beta)\\(\beta_2-\beta)\\-\beta
\end{bmatrix}}_{\mathbf{B}_2}y_1[k].\nonumber
\label{eq:err_example}
\end{eqnarray}

To achieve $\lim_{k\to\infty}\mathbf{e}[k]=\mathbf{0}$ regardless of the initial conditions and the trajectories of $x^{(1)}[k]$ and $x^{(2)}[k]$, we require $\mathbf{M}$ to be Schur stable and $\mathbf{B}_1$ and $\mathbf{B}_2$ to be zero.
To obtain $\mathbf{B}_1=\mathbf{0}$, we must set $\alpha_1=\alpha_2=\alpha_3=\alpha$. However, it is impossible to set $\mathbf{B}_2$ to zero due to the non-zero coupling term $\beta$ between the function of interest $x_1[k]$ and the measurement $y_1[k]=x_2[k]$. Based on the unstable dynamics of $y_1[k]$ (see \eqref{eq:example_model}), it follows that no choice of free design parameters at the various nodes can guarantee $\lim_{k\to\infty}{e}_3[k]=\mathbf{0}.$
\end{proof}
\begin{remark}
For the sake of illustration, we considered the observer model given by \eqref{eqn:distobsexample}.
However, it should be noted that given the plant and measurement model \eqref{eq:example_model}, and the network depicted by Figure \ref{fig:example}, adding more free design parameters to the observer structure would not change the result of Proposition \ref{prop:example}.
\end{remark}

To see why Darouach's conditions do not generally hold in a distributed setting, let us take a closer look at the rank condition \eqref{eq:Darouach_rank}. We see that \eqref{eq:Darouach_rank} implies the existence of matrices $\mathbf{M}_1,\mathbf{M}_2,\mathbf{M}_3$ (not necessarily unique) such that $\mathbf{LA}=\mathbf{M}_1\mathbf{L}+\mathbf{M}_2\mathbf{C}+\mathbf{M}_3\mathbf{CA}$. Thus, referring to equations \eqref{eqn:plant}, \eqref{eqn:Obsmodel} and \eqref{eq:funcdef}, we have $\boldsymbol{\psi}[k+1]=\mathbf{M}_1\boldsymbol{\psi}[k]+\mathbf{M}_2{\mathbf{y}}[k]+\mathbf{M}_3\mathbf{y}[k+1]$. Since the dynamics of $\boldsymbol{\psi}[k]$ are coupled to the measurements that are directly available in a centralized setting (and hence require no estimation), it suffices to maintain a dynamic estimator of order equal to the length of the vector $\boldsymbol{\psi}[k]$. However, in a distributed setting, the entire measurement vector $\mathbf{y}[k]$ is no longer accessible at a single sensor node, thereby precluding the direct use of a centralized functional observer design. 

At this stage, it should be pointed out that even if Darouach's conditions are not met, it is still possible to construct minimal order centralized functional observers of order greater than $r$ \cite{trinh}. However, based on the discussion in this section, to isolate the challenges introduced by the distributed setting, we restrict our attention to tuples $(\mathbf{A,C,L})$ that allow for the construction of $r$-th order centralized functional observers and additionally possess certain extra structure (to be discussed later). Note that the notion of a `minimal order distributed functional observer' is not clearly defined: one may require all nodes to maintain observers of the same order and seek to minimize such an order. Alternatively, one may allow the nodes to maintain observers of different orders and seek to minimize the average order (there might be other possible interpretations as well). In this paper, we stick to the former notion and develop a distributed functional estimation strategy that requires all nodes to maintain observers of the same order that is in general smaller the dimension of the state $\mathbf{x}[k]$. However, the problem of defining and subsequently obtaining a minimal order distributed functional observer remains open.

For the problem under consideration, since the dynamics of $\boldsymbol{\psi}[k]$ are coupled with the measurements that are no longer co-located, it becomes necessary for the sensor nodes to maintain estimates of each others' measurements in order to estimate $\boldsymbol{\psi}[k]$.\footnote{This is illustrated by the system considered in Proposition \ref{prop:example} where the function of interest, namely $x^{(1)}[k]$, is coupled to the state measured by node $1$. Hence, nodes that are not immediate neighbors of node $1$ (like node $3$) need to maintain estimates of $y_1[k]$ in order to estimate $x^{(1)}[k]$.} Building on this intuition, we introduce the notion of `\textit{functional leader sets}' in the following section, and investigate how the dynamics of  the functions $\boldsymbol{\psi}[k]$ are coupled to the measurements of such a set of nodes via an appropriately designed similarity transformation.
\section{Functional Leader Sets} Before formally defining a functional leader set, we need to first introduce some terminology. To this end, note that by a row sub-matrix $\bar{\mathbf{C}}_{\mathcal{S}}$ of $\mathbf{C}_{\mathcal{S}}$, we imply that $\bar{\mathbf{C}}_{\mathcal{S}}$ contains a non-empty (not necessarily proper) subset of the rows of $\mathbf{C}_{\mathcal{S}}$, i.e., $\mathcal{R}(\bar{\mathbf{C}}_{\mathcal{S}})\subseteq\mathcal{R}(\mathbf{C}_{\mathcal{S}})$. Consider the following definitions. 
\begin{definition}[Feasible Leader Set]
\label{defn:feasibleleader} A set of nodes $\mathcal{S}\subseteq\mathcal{V}$ is called a feasible leader set if there exists at least one row sub-matrix $\bar{\mathbf{C}}_{\mathcal{S}}$ of $\mathbf{C}_{\mathcal{S}}$ satisfying the following two conditions:
\begin{itemize}
\item[(i)] \begin{equation}
\textrm{rank}\begin{bmatrix}\mathbf{LA}\\ \mathbf{\bar{\mathbf{C}}_{\mathcal{S}}A} \\ \mathbf{L} \\ \mathbf{\bar{\mathbf{C}}_{\mathcal{S}}} \end{bmatrix}=\textrm{rank}\begin{bmatrix}\mathbf{L}\\\mathbf{\bar{\mathbf{C}}_{\mathcal{S}}}\end{bmatrix},
\label{eq:rankcond1}
\end{equation} 

\item[(ii)] 
\begin{equation}
\textrm{rank}\begin{bmatrix}s\begin{bmatrix}\mathbf{L}\\\mathbf{\bar{\mathbf{C}}_{\mathcal{S}}}\end{bmatrix}-\begin{bmatrix}\mathbf{L}\\\mathbf{\bar{\mathbf{C}}_{\mathcal{S}}}\end{bmatrix}\mathbf{A}\\
\mathbf{\bar{\mathbf{C}}_{\mathcal{S}}}
\end{bmatrix}
=\textrm{rank}\begin{bmatrix}\mathbf{L}\\\mathbf{\bar{\mathbf{C}}_{\mathcal{S}}}\end{bmatrix}, \hspace{2mm}\forall s\in \mathbb{C}, \hspace{2mm} |s| \geq 1.
\label{eq:detectcond}
\end{equation}
\end{itemize} 
\end{definition}
\begin{definition}[Minimal Leader Set] A set $\mathcal{S}$ is called a minimal leader set if $\mathcal{S}$ is a feasible leader set and no subset  of $\mathcal{S}$ is a feasible leader set. A feasible leader set $\mathcal{S} $ with $|\mathcal{S}|=1$ is considered to be minimal by default.
\end{definition}

Given a minimal leader set $\mathcal{S}$, if there are several row sub-matrices of ${\mathbf{C}}_{\mathcal{S}}$ that satisfy conditions \eqref{eq:rankcond1} and \eqref{eq:detectcond}, denote the row sub-matrix that produces the lowest rank of ${\begin{bmatrix} \mathbf{L}^{T}&{\bar{\mathbf{C}}_{\mathcal{S}}}^{T}\end{bmatrix}}^{T}$ by $\mathbf{C}_{\mathcal{S}_{min}}$ and the corresponding rank by $r_{\mathcal{S}_{min}}$.
Let the set of all feasible leader sets be denoted by $\mathcal{F}$ and the set of all minimal leader sets be denoted by $\mathcal{M}=\{\mathcal{S}^{(1)},\cdots,\mathcal{S}^{(l)}\}$, where $l=|\mathcal{M}|.$ The tuples characterizing the minimal leader sets are given by $\{(\mathbf{C}_{\mathcal{S}^{(1)}_{min}},r_{\mathcal{S}^{(1)}_{min}}),\cdots,(\mathbf{C}_{\mathcal{S}^{(l)}_{min}},r_{\mathcal{S}^{(l)}_{min}})\}.$ 
\begin{definition}[Functional Leader Set] A set $\mathcal{S}^{(i)}\in\mathcal{M}$ is referred to as a functional leader set if $r_{\mathcal{S}^{(i)}_{min}}\leq r_{\mathcal{S}^{(j)}_{min}} \forall j\in\{1,\cdots,l\}\setminus\{i\}$. 
\end{definition}

Thus, a functional leader set is a minimal leader set that yields the lowest rank on the R.H.S. of equation \eqref{eq:rankcond1} among all minimal leader sets. Given any tuple $(\mathbf{A,C,L})$ described by \eqref{eqn:plant}, \eqref{eqn:Obsmodel}, \eqref{eq:funcdef}, if $\mathcal{F}$ is non-empty, then it is easily seen that $\mathcal{M}$ is also non-empty and hence we are guaranteed the existence of at least one functional leader set. If there are multiple functional leader sets, it suffices to pick any one for our subsequent analysis since all such sets will essentially lead to distributed  functional observers of the same order. Thus, if $\mathcal{F}$ is non-empty, we pick any functional leader set and denote it by ${\mathcal{S}^{\star}}$. For notational simplicity, we denote the tuple characterizing ${\mathcal{S}^{\star}}$ by $(\mathbf{C}^{\star},r^{\star})$.
The nodes in $\mathcal{S}^{\star}$ are referred to as \textit{functional leader nodes} and it will be subsequently shown that such nodes play a key role in solving Problem 1.

\begin{remark} 
\label{rem:functset}
Roughly speaking, it will soon be apparent that any set of sensor nodes belonging to $\mathcal{F}$ (and hence $\mathcal{M}$) can effectively serve as `leaders' in the consensus dynamics for estimating the functions of interest, thereby justifying the proposed terminology. Furthermore, if set $\mathcal{S}^{(i)}\in\mathcal{M}$ is chosen as the leader set, then our design would result in every node maintaining a distributed functional observer of order $r_{\mathcal{S}^{(i)}_{min}}$. The definition of a functional leader set \footnote{Given a tuple $(\mathbf{A,C,L})$ described by \eqref{eqn:plant}, \eqref{eqn:Obsmodel} and \eqref{eq:funcdef}, the design of an algorithm that finds a functional leader set $\mathcal{S}^{\star}$  (provided $\mathcal{F}$ is non-empty), and the subsequent analysis of its complexity, are interesting  avenues of future research.} is thus motivated by the goal of obtaining the distributed functional observer of minimal order among all feasible leader sets.
\end{remark}

Before proceeding further, we illustrate some of the concepts introduced in this section via the following model:
\begin{equation}
\mathbf{A}=\begin{bmatrix}0&2&0\\3&0&0\\0&0&5\end{bmatrix}, \mathbf{C}_1=\begin{bmatrix}0&1&0\\0&0&1\end{bmatrix}, \mathbf{L}=\begin{bmatrix}1&0&0\end{bmatrix}.
\label{eq:illustrate}
\end{equation}
Clearly, $\mathcal{S}=\{1\}$ is a minimal leader set with $\mathbf{C}_1$ satisfying both the rank conditions \eqref{eq:rankcond1} and \eqref{eq:detectcond}. However, these conditions are also satisfied by the row sub-matrix formed by considering just the first row of $\mathbf{C}_1$. While considering the entire $\mathbf{C}_1$ will lead to a distributed functional observer of order $3$, considering only its first row will lead to an observer of order $2$ using our design methodology. Given a minimal leader set $\mathcal{S}$, the foregoing discussion motivates the need to check whether sub-matrices of $\mathbf{C}_{\mathcal{S}}$ satisfy the conditions \eqref{eq:rankcond1} and \eqref{eq:detectcond}.\footnote{Intuitively, we see that the state of interest, namely state $1$, is coupled only to the second state. Hence, the extra information about the third state provided by the second row of $\mathbf{C}_1$ is irrelevant in the present context. Based on this discussion, note that our approach ensures that the order of the proposed distributed functional observer is in general smaller than the dimension of the detectable subspace of the pair $(\mathbf{A,C})$, where $\mathbf{C}$ represents the collective observation matrix.} With $\mathbf{A}$ and $\mathbf{L}$ as described in \eqref{eq:illustrate}, suppose we had $\mathbf{C}_1=\begin{bmatrix}0&1&0\end{bmatrix}$ and $\mathbf{C}_2=\begin{bmatrix}0&0&1\end{bmatrix}$. Then, $\mathcal{S}=\{1,2\}$ would be a feasible (but not minimal) leader set, $\mathcal{S}=\{1\}$ would be a minimal leader set and $\mathcal{S}=\{2\}$ would not be a feasible leader set.

The following property of the functional leader set $\mathcal{S}^{\star}$ will be critical in our subsequent design.\footnote{Clearly, any feasible leader set possesses a similar property; the rationale behind considering the functional leader set in particular is made apparent by Remark \ref{rem:functset}.}
\begin{lemma} 
\label{lemma:funcset}
Given a tuple $(\mathbf{A,C,L})$ described by \eqref{eqn:plant}, \eqref{eqn:Obsmodel} and \eqref{eq:funcdef} such that $\mathcal{F}$ is non-empty, let the functional leader set $\mathcal{S}^{\star}$ be characterized by the tuple $(\mathbf{C}^{\star},r^{\star})$ with $p$ denoting the number of rows of $\mathbf{C}^{\star}$. Then, there exists a similarity transformation matrix $\mathbf{T}$ that brings $\mathbf{(A,C^{\star})}$ to the following form:
\begin{equation}
\bar{\mathbf{A}} =\begin{bmatrix} \mathbf{A}_{D} & \mathbf{0}\\
\mathbf{A}_{E} & \mathbf{A}_{F}\end{bmatrix}, \hspace{1mm}
\bar{\mathbf{C}} =\begin{bmatrix}\mathbf{C}_{D} & \mathbf{0}
\end{bmatrix},
\label{eq:transform1}
\end{equation}
where $\mathbf{A}_{D} \in \mathbb{R}^{r^{\star}\times r^{\star}}$, $\mathbf{C}_{D} \in \mathbb{R}^{p\times r^{\star}}$. Furthermore, the following properties hold: (i) the state vector corresponding to the matrix $\mathbf{A}_{D}$ has the functions of interest $\boldsymbol{\psi}[k]$ as its first $r$ components, and a subset of measurements corresponding to the matrix $\mathbf{C}^{\star}$ as the remaining $r^{\star}-r$ components; and (ii) the pair $(\mathbf{A}_{D},\mathbf{C}_{D})$ is detectable.
\end{lemma}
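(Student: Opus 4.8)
The plan is to build $\mathbf{T}$ from a full-rank row selection of ${\begin{bmatrix}\mathbf{L}^{T}&{\mathbf{C}^{\star}}^{T}\end{bmatrix}}^{T}$ and then read off the claimed structure from the two defining conditions \eqref{eq:rankcond1}--\eqref{eq:detectcond} of a feasible leader set (which $\mathcal{S}^{\star}$, being a minimal leader set, satisfies with $\bar{\mathbf{C}}_{\mathcal{S}}=\mathbf{C}^{\star}$). Since $\mathbf{L}$ has full row rank $r$ and $\textrm{rank}\begin{bmatrix}\mathbf{L}\\\mathbf{C}^{\star}\end{bmatrix}=r^{\star}$, I would first choose $\mathbf{P}_{1}\in\mathbb{R}^{r^{\star}\times n}$ whose first $r$ rows are exactly the rows of $\mathbf{L}$ and whose remaining $r^{\star}-r$ rows are selected from the rows of $\mathbf{C}^{\star}$ so that $\mathbf{P}_{1}$ has full row rank $r^{\star}$; by construction $\mathcal{R}(\mathbf{P}_{1})=\mathcal{R}\begin{bmatrix}\mathbf{L}\\\mathbf{C}^{\star}\end{bmatrix}$, so $\textrm{null}(\mathbf{P}_{1})=\textrm{null}\begin{bmatrix}\mathbf{L}\\\mathbf{C}^{\star}\end{bmatrix}\subseteq\textrm{null}(\mathbf{C}^{\star})$. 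Completing $\mathbf{P}_{1}$ to an invertible $\mathbf{Q}={\begin{bmatrix}\mathbf{P}_{1}^{T}&\mathbf{P}_{2}^{T}\end{bmatrix}}^{T}$ and setting $\mathbf{T}=\mathbf{Q}^{-1}$, so that $\bar{\mathbf{A}}=\mathbf{Q}\mathbf{A}\mathbf{Q}^{-1}$ and $\bar{\mathbf{C}}=\mathbf{C}^{\star}\mathbf{Q}^{-1}$, the transformed state $\mathbf{z}[k]=\mathbf{Q}\mathbf{x}[k]$ has $\mathbf{L}\mathbf{x}[k]=\boldsymbol{\psi}[k]$ as its first $r$ entries and the selected rows of $\mathbf{C}^{\star}\mathbf{x}[k]$ as the next $r^{\star}-r$ entries, which is exactly property (i).

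Next I would invoke \eqref{eq:rankcond1}, which is equivalent to $\mathcal{R}\begin{bmatrix}\mathbf{L}\mathbf{A}\\\mathbf{C}^{\star}\mathbf{A}\end{bmatrix}\subseteq\mathcal{R}\begin{bmatrix}\mathbf{L}\\\mathbf{C}^{\star}\end{bmatrix}=\mathcal{R}(\mathbf{P}_{1})$. Since each row of $\mathbf{P}_{1}$ is a linear combination of rows of $\mathbf{L}$ and of $\mathbf{C}^{\star}$, each row of $\mathbf{P}_{1}\mathbf{A}$ lies in $\mathcal{R}\begin{bmatrix}\mathbf{L}\mathbf{A}\\\mathbf{C}^{\star}\mathbf{A}\end{bmatrix}\subseteq\mathcal{R}(\mathbf{P}_{1})$, i.e.\ $\mathcal{R}(\mathbf{P}_{1}\mathbf{A})\subseteq\mathcal{R}(\mathbf{P}_{1})$; as $\mathbf{P}_{1}$ has full row rank this yields a unique $\mathbf{A}_{D}\in\mathbb{R}^{r^{\star}\times r^{\star}}$ with $\mathbf{P}_{1}\mathbf{A}=\mathbf{A}_{D}\mathbf{P}_{1}$. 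Partitioning $\bar{\mathbf{A}}$ conformally with $\mathbf{Q}$, the identity $\mathbf{Q}\mathbf{A}=\bar{\mathbf{A}}\mathbf{Q}$ gives in its top block row $\mathbf{P}_{1}\mathbf{A}=\bar{\mathbf{A}}_{11}\mathbf{P}_{1}+\bar{\mathbf{A}}_{12}\mathbf{P}_{2}$; uniqueness of the expansion of $\mathbf{P}_{1}\mathbf{A}\in\mathcal{R}(\mathbf{P}_{1})$ in the row basis furnished by $\mathbf{Q}$ forces $\bar{\mathbf{A}}_{12}=\mathbf{0}$ and $\bar{\mathbf{A}}_{11}=\mathbf{A}_{D}$, establishing the block-triangular form of $\bar{\mathbf{A}}$ with $\mathbf{A}_{E},\mathbf{A}_{F}$ the remaining blocks. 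Writing $\mathbf{Q}^{-1}={\begin{bmatrix}\mathbf{V}_{1}&\mathbf{V}_{2}\end{bmatrix}}$, the relation $\mathbf{Q}\mathbf{Q}^{-1}=\mathbf{I}_{n}$ gives $\mathbf{P}_{1}\mathbf{V}_{2}=\mathbf{0}$, so the columns of $\mathbf{V}_{2}$ lie in $\textrm{null}(\mathbf{P}_{1})\subseteq\textrm{null}(\mathbf{C}^{\star})$, hence $\bar{\mathbf{C}}=\mathbf{C}^{\star}\mathbf{Q}^{-1}={\begin{bmatrix}\mathbf{C}^{\star}\mathbf{V}_{1}&\mathbf{0}\end{bmatrix}}$; I set $\mathbf{C}_{D}=\mathbf{C}^{\star}\mathbf{V}_{1}\in\mathbb{R}^{p\times r^{\star}}$ and record that $\mathbf{C}^{\star}=\bar{\mathbf{C}}\mathbf{Q}=\mathbf{C}_{D}\mathbf{P}_{1}$.

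Finally, for property (ii) I would push these factorizations through \eqref{eq:detectcond}. Because $\mathcal{R}\begin{bmatrix}\mathbf{L}\\\mathbf{C}^{\star}\end{bmatrix}=\mathcal{R}(\mathbf{P}_{1})$ with $\textrm{rank}\,\mathbf{P}_{1}=r^{\star}$, we can write $\begin{bmatrix}\mathbf{L}\\\mathbf{C}^{\star}\end{bmatrix}=\mathbf{G}\mathbf{P}_{1}$ for a full-column-rank $\mathbf{G}$; then using $\mathbf{P}_{1}\mathbf{A}=\mathbf{A}_{D}\mathbf{P}_{1}$ and $\mathbf{C}^{\star}=\mathbf{C}_{D}\mathbf{P}_{1}$, the matrix in \eqref{eq:detectcond} equals ${\begin{bmatrix}\mathbf{G}(s\mathbf{I}_{r^{\star}}-\mathbf{A}_{D})\\\mathbf{C}_{D}\end{bmatrix}}\mathbf{P}_{1}$, whose rank equals $\textrm{rank}{\begin{bmatrix}\mathbf{G}(s\mathbf{I}_{r^{\star}}-\mathbf{A}_{D})\\\mathbf{C}_{D}\end{bmatrix}}$ since $\mathbf{P}_{1}$ has full row rank. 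Condition \eqref{eq:detectcond} therefore asserts this $r^{\star}$-column matrix has full column rank for all $|s|\geq 1$; as $\mathbf{G}$ has full column rank, $\mathbf{G}(s\mathbf{I}_{r^{\star}}-\mathbf{A}_{D})v=\mathbf{0}\Leftrightarrow(s\mathbf{I}_{r^{\star}}-\mathbf{A}_{D})v=\mathbf{0}$, so no nonzero $v$ satisfies $(s\mathbf{I}_{r^{\star}}-\mathbf{A}_{D})v=\mathbf{0}$ and $\mathbf{C}_{D}v=\mathbf{0}$ with $|s|\geq 1$, i.e.\ $(\mathbf{A}_{D},\mathbf{C}_{D})$ is detectable by the PBH test.

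The construction itself is essentially routine linear algebra; the step I expect to need the most care is this last one --- transferring condition \eqref{eq:detectcond}, stated in terms of the generally rank-deficient stacked matrix ${\begin{bmatrix}\mathbf{L}^{T}&{\mathbf{C}^{\star}}^{T}\end{bmatrix}}^{T}$, into the PBH test for the reduced pair $(\mathbf{A}_{D},\mathbf{C}_{D})$, which hinges on peeling off the full-row-rank factor $\mathbf{P}_{1}$ on the right and the full-column-rank factor $\mathbf{G}$ on the left without altering ranks. A secondary point worth making explicit is that the \emph{same} row selection $\mathbf{P}_{1}$ renders both \eqref{eq:rankcond1} and \eqref{eq:detectcond} usable, since only the identities $\mathcal{R}(\mathbf{P}_{1})=\mathcal{R}\begin{bmatrix}\mathbf{L}\\\mathbf{C}^{\star}\end{bmatrix}$ and $\textrm{rank}\,\mathbf{P}_{1}=r^{\star}$ are invoked in each.
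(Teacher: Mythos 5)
Your proposal is correct and follows essentially the same route as the paper: you build the transformation from the full-row-rank stack $\mathbf{P}_{1}$ (the paper's $\mathbf{\Sigma}={\begin{bmatrix}\mathbf{L}^{T}&\tilde{\mathbf{C}^{\star}}^{T}\end{bmatrix}}^{T}$), use condition \eqref{eq:rankcond1} to get $\mathbf{P}_{1}\mathbf{A}=\mathbf{A}_{D}\mathbf{P}_{1}$ and the block-triangular form, and transfer \eqref{eq:detectcond} into a PBH test for $(\mathbf{A}_{D},\mathbf{C}_{D})$. The only difference is cosmetic: where the paper peels off $\mathbf{P}_{1}$ via Moore--Penrose pseudo-inverse identities, you use the factorization ${\begin{bmatrix}\mathbf{L}^{T}&{\mathbf{C}^{\star}}^{T}\end{bmatrix}}^{T}=\mathbf{G}\mathbf{P}_{1}$ with $\mathbf{G}$ of full column rank, which if anything makes the rank-transfer step slightly more explicit.
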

\begin{proof}
By definition, since $\mathcal{S}^{\star}$ is a feasible leader set, the rank conditions \eqref{eq:rankcond1} and \eqref{eq:detectcond} are satisfied by $\bar{\mathbf{C}}_{\mathcal{S}^{\star}}=\mathbf{C}^{\star}$. In particular, based on the rank condition \eqref{eq:rankcond1}, it is easy to see that $\mathcal{R}({\begin{bmatrix} \mathbf{L}^T&{\mathbf{C}^{\star}}^T\end{bmatrix}}^T)$ is $\mathbf{A}^T$-invariant. Define $\mathbf{\Sigma} \triangleq {\begin{bmatrix} \mathbf{L}^T&{\tilde{{\mathbf{C}}^{\star}}}^T\end{bmatrix}}^T$, where $\tilde{{\mathbf{C}}^{\star}}$ contains all the linearly independent rows of $\mathbf{C}^{\star}$ that are also linearly independent of the rows of $\mathbf{L}$. Noting that $\textrm{rank}\hspace{1mm}\mathbf{\Sigma}=r^{\star}$, it follows that there exists a matrix ${\mathbf{A}}_{D} \in \mathbb{R}^{r^{\star}\times r^{\star}}$ such that
\begin{equation}
\mathbf{\Sigma}^{}\mathbf{A}={\mathbf{A}}_{D}\mathbf{\Sigma}^{}.
\label{eq:statedecoup}
\end{equation}
Let us define a non-singular transformation matrix as $\mathbf{T}\triangleq{\begin{bmatrix}{\mathbf{\Sigma}}^T & {\mathbf{V}}^T\end{bmatrix}}^T$, where the rows of $\mathbf{V} \in {\mathbb{R}}^{(n-r^{\star})\times{n}}$ represent an orthogonal basis for the null space of $\mathbf{\Sigma}^{}$. Using \eqref{eq:statedecoup}, we then conclude that
\begin{equation}
\mathbf{TA}=\begin{bmatrix}\mathbf{\Sigma}^{}\mathbf{A}\\\mathbf{VA}\end{bmatrix}=\begin{bmatrix}\mathbf{A}_{D}\mathbf{\Sigma}^{}\\ \mathbf{VA}\end{bmatrix}=
\begin{bmatrix}\begin{bmatrix}\mathbf{A}_{D}&\mathbf{0}\end{bmatrix}\mathbf{T}\\
\mathbf{VA}{\mathbf{T}}^{-1}\mathbf{T}\end{bmatrix}.
\end{equation}
By partitioning the $(n-r^{\star})\times n$ matrix $\mathbf{VA}{\mathbf{T}}^{-1}$ as $\mathbf{VA}{\mathbf{T}}^{-1}=\begin{bmatrix}\mathbf{A}_{E} & \mathbf{A}_{F}\end{bmatrix}$, where $\mathbf{A}_{E}\in\mathbb{R}^{(n-r^{\star})\times r^{\star}}$ and $\mathbf{A}_{F}\in\mathbb{R}^{(n-r^{\star})\times (n-r^{\star})}$, we further obtain
\begin{equation}
\mathbf{TA}=\begin{bmatrix} \mathbf{A}_{D} & \mathbf{0}\\
\mathbf{A}_{E} & \mathbf{A}_{F}\end{bmatrix}\mathbf{T}.
\label{eq:sim1}
\end{equation}
Next, note that $\mathcal{R}(\mathbf{C}^{\star})\subseteq\mathcal{R}(\mathbf{\Sigma}^{})$. Hence, there exists a matrix $\mathbf{C}_{D} \in \mathbb{R}^{p\times r^{\star}}$ such that 
\begin{equation}
\mathbf{C}^{\star}=\mathbf{C}_{D}\mathbf{\Sigma}^{}.
\label{eq:Cdecoup}
\end{equation}
Noting that $\mathbf{\Sigma}^{}{\mathbf{T}}^{-1}=\begin{bmatrix}\mathbf{I}_{}&\mathbf{0}\end{bmatrix}$, and using \eqref{eq:Cdecoup}, we see that
\begin{equation}
\mathbf{C}^{\star}{\mathbf{T}}^{-1}=\begin{bmatrix}\mathbf{C}_{D} & \mathbf{0}
\end{bmatrix}.
\label{eq:sim2}
\end{equation}
Defining $\bar{\mathbf{A}}\triangleq\mathbf{T}\mathbf{A}{\mathbf{T}}^{-1}$, $\bar{\mathbf{C}}\triangleq\mathbf{C}^{\star}{\mathbf{T}}^{-1}$, and using \eqref{eq:sim1}, \eqref{eq:sim2}, we obtain \eqref{eq:transform1}. It remains to show that \eqref{eq:detectcond} implies detectability of the pair $(\mathbf{A}_{D},\mathbf{C}_{D})$. To this end, note that the unique solutions to \eqref{eq:statedecoup} and \eqref{eq:Cdecoup} are given by $\mathbf{A}_{D}=\mathbf{\Sigma}^{}\mathbf{A}{{\mathbf{\Sigma}}^{}}^{\dagger}$ and $\mathbf{C}_{D}=\mathbf{C}^{\star}{{\mathbf{\Sigma}}^{}}^{\dagger}$, respectively. Based on the PBH test, the pair $(\mathbf{A}_{D},\mathbf{C}_{D})$ is detectable if and only if
\begin{equation}
\textrm{rank}\begin{bmatrix}s\mathbf{I}_{}-{\mathbf{\Sigma}}^{}\mathbf{A}{{\mathbf{\Sigma}}^{}}^{\dagger}\\
\mathbf{C}^{\star}{{\mathbf{\Sigma}}^{}}^{\dagger}
\end{bmatrix}
=r^{\star}, \hspace{2mm}\forall s\in \mathbb{C}, \hspace{2mm} |s| \geq 1.
\label{eq:PBH}
\end{equation}
Since $\begin{bmatrix} {{\mathbf{\Sigma}}^{}}^{\dagger} & \mathbf{I}-{{\mathbf{\Sigma}}^{}}^{\dagger}\mathbf{\Sigma}\end{bmatrix}$ is full row-rank, it follows that
\begin{eqnarray}
\textrm{rank}\begin{bmatrix}s{\mathbf{\Sigma}}^{}-{\mathbf{\Sigma}}^{}\mathbf{A}\\
\mathbf{C}^{\star}
\end{bmatrix}&=&\textrm{rank}\begin{bmatrix}s\mathbf{\Sigma}^{}-\mathbf{\Sigma}^{}\mathbf{A}\\
\mathbf{C}^{\star}
\end{bmatrix}\begin{bmatrix} {{\mathbf{\Sigma}}^{}}^{\dagger} & \mathbf{I}_{}-{{\mathbf{\Sigma}}^{}}^{\dagger}{\mathbf{\Sigma}}^{}\end{bmatrix}\nonumber\\
&=&\textrm{rank}\begin{bmatrix}s\mathbf{I}_{}-\mathbf{\Sigma}\mathbf{A}{\mathbf{\Sigma}}^{\dagger}&\mathbf{0}\\
\label{eq:rank}
\mathbf{C}^{\star}{\mathbf{\Sigma}}^{\dagger}&\mathbf{0}
\end{bmatrix}.
\end{eqnarray}
The last equality follows by noting that the matrix $(\mathbf{I}-{\mathbf{\Sigma}}^{\dagger}\mathbf{\Sigma})$ projects onto the null space of $\mathbf{\Sigma}$, and that $\mathcal{R}(\mathbf{\Sigma}\mathbf{A})$, $\mathcal{R}(\mathbf{C}^{\star})$ are both contained in $\mathcal{R}(\mathbf{\Sigma})$. Finally, combining \eqref{eq:detectcond}, \eqref{eq:PBH} and \eqref{eq:rank} leads to the desired result.
\end{proof}
\begin{remark} Note that  Lemma \ref{lemma:funcset} does not describe a standard detectable decomposition of the pair $(\mathbf{A},\mathbf{C}^{\star})$. In fact, the dimension of the square matrix $\mathbf{A}_{D}$, namely $r^{\star}$, is in general smaller than the dimensions of the detectable subspaces of the pairs $(\mathbf{A},\mathbf{C}^{\star})$, $(\mathbf{A},\mathbf{C}_{\mathcal{S}^{\star}})$ and $(\mathbf{A,C})$.
\end{remark}

Based on \eqref{eq:transform1}, we obtain the following dynamics:
\begin{equation}
\begin{split}
\boldsymbol{\phi}[k+1]=\mathbf{A}_{D}\boldsymbol{\phi}[k],
\bar{y}[k]=\mathbf{C}_{D}\boldsymbol{\phi}[k],
\end{split}
\label{eq:redmodel}
\end{equation}
where $\boldsymbol{\phi}[k]=\begin{bmatrix} \mathbf{I}_{r^{\star}}&\mathbf{0}\end{bmatrix}\mathbf{Tx}[k]=\mathbf{\Sigma}\mathbf{x}[k]$, $\bar{\mathbf{y}}[k]$ represents measurements of the sensor nodes in $\mathcal{S}^{\star}$ corresponding to the matrix $\mathbf{C}^{\star}$, i.e., $\bar{\mathbf{y}}[k]=\mathbf{C}^{\star}\mathbf{x}[k]$, and $\mathbf{A}_{D}$, $\mathbf{C}_{D}$, $\mathbf{\Sigma}={\begin{bmatrix} \mathbf{L}^T&{\tilde{{\mathbf{C}}^{\star}}}^T\end{bmatrix}}^T$ are as described by Lemma \ref{lemma:funcset}. In particular, note that the first $r$ states of the vector $\boldsymbol{\phi}[k]$ represent the functions of interest, namely $\boldsymbol{\psi}[k]$. Effectively, we have converted the distributed functional estimation problem to the problem of designing a full-order distributed state observer for the state $\boldsymbol{\phi}[k]$ described by \eqref{eq:redmodel}. At this stage, any of the distributed state observer approaches outlined in \cite{park,wang,mitra2016} can be employed for estimating $\boldsymbol{\phi}[k]$. In what follows, we develop a  distributed functional observer that guarantees asymptotic reconstruction of $\boldsymbol{\phi}[k]$, and hence $\boldsymbol{\psi}[k]$ at every sensor node, based on the approach adopted in our recent work \cite{mitra2016}. We do so because the approaches described in \cite{wang,park} require certain sensor nodes to maintain observers of order greater than $r^{\star}$, whereas with our method all nodes maintain observers of order $r^{\star}$.
\section{Distributed Functional Observer Design}
\label{sec:Distributedfuncobs}
For clarity of exposition, we make the following assumption. 

\begin{assumption}
The graph $\mathcal{G}$ is strongly connected.\footnote{A strongly connected graph $\mathcal{G}$ contains a directed path from any node $i$ to any other node $j$, where $i,j \in \mathcal{V}$.}
\label{assump:graph}
\end{assumption}

Referring to the dynamics \eqref{eq:redmodel}, our distributed functional estimation strategy can be summarized as: given a node $i$ in $\mathcal{S}^{\star}$, we wish to identify the portion of $\boldsymbol{\phi}[k]$ that is locally detectable at node $i$. Accordingly, node $i$ will maintain a local Luenberger observer for this locally detectable portion and rely on consensus dynamics for estimating the rest of  $\boldsymbol{\phi}[k]$. Given this strategy for the functional leader nodes, it will be established that the nodes in $\mathcal{V}\setminus\mathcal{S}^{\star}$ need to simply run consensus for different components of $\boldsymbol{\phi}[k]$ along spanning trees rooted at the functional leader set $\mathcal{S}^{\star}$. To achieve this objective, we  first use the concept of a \textit{multi-sensor observable canonical decomposition}. Essentially, such a decomposition proceeds as follows: given a list of $M$ indexed sensors, perform an observable canonical decomposition with respect to the first sensor. Next, perform another observable canonical decomposition that reveals the observable subspace of sensor 2 \textit{within the unobservable subspace of sensor 1}, and repeat the process until the last sensor is reached. Summarily, a sequence of $M$ observable canonical decompositions need to be performed, one for each sensor, with the last decomposition revealing the portions of the state space that can and cannot be observed using the cumulative measurements of all the sensors. Without loss of generality, let the sensors in the functional leader set $\mathcal{S}^{\star}$ be indexed as $\mathcal{S}^{\star}=\{1,2,\cdots,M\}$ where $M=|\mathcal{S}^{\star}|$. The next result then readily follows from \cite{mitra2016}.
\begin{proposition}
\label{transformations}
Given the dynamics \eqref{eq:redmodel}, let the matrix $\mathbf{C}_{D}$ be partitioned among the individual sensor nodes of the functional leader set $\mathcal{S}^{\star}$ as $\mathbf{C}_{D}={\begin{bmatrix}\mathbf{C}^T_{D_1} & \cdots & \mathbf{C}^T_{D_M}\end{bmatrix}}^T$.\footnote{Accordingly, $\bar{\mathbf{y}}_{i}[k]=\mathbf{C}_{D_i}\boldsymbol{\phi}[k], \forall{i}\in\mathcal{S}^{\star}$.} Then, there exists a similarity transformation matrix $\mathcal{T}_{D}\in\mathbb{R}^{r^{\star}\times r^{\star}}$ which transforms the pair $(\mathbf{A}_{D},\mathbf{C}_{D})$ to $(\bar{\mathbf{A}}_{D},\bar{\mathbf{C}}_{D})$, such that
\begin{equation}
\begin{split}
\bar{\mathbf{A}}_{D} &= \left[
\begin{array}{c|c|ccc}
\mathbf{A}_{11}  & \multicolumn{4}{c}{\mathbf{0}} \\
\hline
\mathbf{A}_{21}
 & 
\mathbf{A}_{22} & \multicolumn{3}{c}{\mathbf{0}} \\
\cline{2-5}
\vdots & \vdots & \hspace{-5mm} \ddots  & \vdots & \vdots\\
\mathbf{A}_{(M-1)1}& \mathbf{A}_{(M-1)2} \cdots &  \mathbf{A}_{(M-1)(M-1)} & \multicolumn{2}{|c}{\mathbf{0}}\\
\cline{3-5}
\mathbf{A}_{M1}& \mathbf{A}_{M2} \hspace{2mm}\cdots & \mathbf{A}_{M(M-1)} & \multicolumn{1}{|c}{\mathbf{A}_{MM}} & \multicolumn{1}{|c}{\mathbf{0}}\\
\cline{4-5}
\mathbf{A}_1& \hspace{-2mm}\mathbf{A}_2 \hspace{2mm} \cdots & \mathbf{A}_{(M-1)} & \multicolumn{1}{|c}{\mathbf{A}_{M}} & \multicolumn{1}{|c}{\mathbf{A}_{\mathcal{U}}}\\
\end{array}
\right], \\~\\
\bar{\mathbf{C}}_{D} &= \begin{bmatrix} \bar{\mathbf{C}}_{D_1} \\ \bar{\mathbf{C}}_{D_2} \vspace{-1mm} \\  \vdots \\ \bar{\mathbf{C}}_{D_M} \end{bmatrix} = \left[ \begin{array}{ccccc} \mathbf{C}_{11} & \multicolumn{4}{|c}{\mathbf{0}}\\
\hline
\mathbf{C}_{21} & \multicolumn{1}{c}{\mathbf{C}_{22}} & \multicolumn{3}{|c}{\mathbf{0}}\\
\hline
\vdots&\vdots&\vdots&\vdots&\vdots\\
\mathbf{C}_{M1} & \multicolumn{1}{c}{\mathbf{C}_{M2}}  & \cdots  \mathbf{C}_{M(M-1)} & \mathbf{C}_{MM}  & \multicolumn{1}{|c}{\mathbf{0}}\\
 \end{array}
 \right].
\end{split}
\label{eqn:gen_form}
\end{equation}
Furthermore, the following properties hold: (i) the pair $(\mathbf{A}_{ii},\mathbf{C}_{ii})$ is observable $\forall i \in \{1,2, \cdots, M\}$; and (ii) the matrix $\mathbf{A}_{\mathcal{U}}$ describes the dynamics of the unobservable subspace of the pair $(\mathbf{A}_{D},\mathbf{C}_D)$.
\end{proposition}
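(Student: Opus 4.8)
The plan is to construct $\mathcal{T}_D$ by composing $M$ successive Kalman observable canonical decompositions, one for each sensor in $\mathcal{S}^{\star}=\{1,\dots,M\}$, exactly along the lines of the multi-sensor observable canonical decomposition described above; this is the construction carried out in \cite{mitra2016}. Throughout, I identify the state space of \eqref{eq:redmodel} with $\mathbb{R}^{r^{\star}}$ and use the standard fact that the unobservable subspace of a pair $(\mathbf{A},\mathbf{C})$ is the largest $\mathbf{A}$-invariant subspace contained in $\ker\mathbf{C}$.

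\emph{The recursion.} Set $\mathcal{W}_0=\mathbb{R}^{r^{\star}}$. For $i=1,\dots,M$, assume inductively that $\mathcal{W}_{i-1}$ is an $\mathbf{A}_D$-invariant subspace equal to the unobservable subspace of the stacked pair $(\mathbf{A}_D,[\,\mathbf{C}_{D_1}^T \cdots \mathbf{C}_{D_{i-1}}^T\,]^T)$, with a fixed complement $\mathcal{O}_j$ of $\mathcal{W}_j$ in $\mathcal{W}_{j-1}$ already chosen for each $j<i$. Since $\mathcal{W}_{i-1}$ is $\mathbf{A}_D$-invariant, the compression $\mathbf{A}_D|_{\mathcal{W}_{i-1}}$ is well defined; apply the observable canonical decomposition to $(\mathbf{A}_D|_{\mathcal{W}_{i-1}},\mathbf{C}_{D_i}|_{\mathcal{W}_{i-1}})$ to obtain $\mathcal{W}_{i-1}=\mathcal{O}_i\oplus\mathcal{W}_i$, where, in the adapted basis, the corner block and the corresponding output block form an observable pair $(\mathbf{A}_{ii},\mathbf{C}_{ii})$ and $\mathbf{C}_{D_i}$ vanishes on $\mathcal{W}_i$. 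A short induction then shows $\mathcal{W}_i$ equals the unobservable subspace of $(\mathbf{A}_D,[\,\mathbf{C}_{D_1}^T\cdots\mathbf{C}_{D_i}^T\,]^T)$: any $\mathbf{A}_D$-invariant subspace inside $\bigcap_{j\le i}\ker\mathbf{C}_{D_j}$ lies in $\mathcal{W}_{i-1}$ and in $\ker\mathbf{C}_{D_i}$, hence in $\mathcal{W}_i$, while $\mathcal{W}_i$ itself enjoys these properties. After the $M$-th step, let $\mathcal{T}_D^{-1}$ be the matrix whose columns form bases of $\mathcal{O}_1,\mathcal{O}_2,\dots,\mathcal{O}_M,\mathcal{W}_M$ listed in that order; $\mathcal{T}_D$ is the associated change of basis.

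\emph{Verification of the structure.} With the coordinate blocks ordered as $\mathcal{O}_1,\dots,\mathcal{O}_M,\mathcal{W}_M$, each subspace $\mathcal{W}_i=\mathcal{O}_{i+1}\oplus\cdots\oplus\mathcal{O}_M\oplus\mathcal{W}_M$ is $\mathbf{A}_D$-invariant, so the block-rows indexed by $\mathcal{O}_1,\dots,\mathcal{O}_i$ carry only zeros in the block-columns belonging to $\mathcal{W}_i$; running this over all $i$ gives precisely the block-lower-triangular zero pattern of $\bar{\mathbf{A}}_D$ in \eqref{eqn:gen_form}, with diagonal blocks $\mathbf{A}_{ii}$ and last diagonal block $\mathbf{A}_{\mathcal{U}}=\mathbf{A}_D|_{\mathcal{W}_M}$. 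Property (i) is immediate because each $(\mathbf{A}_{ii},\mathbf{C}_{ii})$ is observable by construction, and property (ii) follows since $\mathcal{W}_M$ is the unobservable subspace of $(\mathbf{A}_D,\mathbf{C}_D)$. For the output matrix, $\mathbf{C}_{D_i}$ vanishes on $\mathcal{W}_i\supseteq\mathcal{O}_{i+1}\oplus\cdots\oplus\mathcal{W}_M$, so the block-columns of $\bar{\mathbf{C}}_{D_i}=\mathbf{C}_{D_i}\mathcal{T}_D^{-1}$ beyond the $i$-th vanish, yielding the staircase pattern of $\bar{\mathbf{C}}_D$ in \eqref{eqn:gen_form} (the nonzero entries being $\mathbf{C}_{i1},\dots,\mathbf{C}_{ii}$, the components of $\mathbf{C}_{D_i}$ along $\mathcal{O}_1,\dots,\mathcal{O}_i$).

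\emph{Main obstacle.} The substantive point is the bookkeeping that composing the $M$ successive changes of basis does not disturb the zero blocks produced earlier. This is handled by observing that the $i$-th decomposition acts only within the invariant subspace $\mathcal{W}_{i-1}$: in the coordinates already fixed on $\mathcal{O}_1\oplus\cdots\oplus\mathcal{O}_{i-1}$ it is a block-diagonal transformation $diag(\mathbf{I},\mathcal{T}'_i)$ that leaves all previously created blocks untouched, so the triangular pattern is built up monotonically. The only other ingredient is the short induction identifying $\mathcal{W}_M$ with the joint unobservable subspace of $(\mathbf{A}_D,\mathbf{C}_D)$, which relies on the elementary fact that the largest $\mathbf{A}$-invariant subspace in an intersection of kernels can be found by intersecting successively within invariant subspaces.
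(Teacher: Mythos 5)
Your argument is correct and is essentially the paper's approach: the paper simply invokes the multi-sensor observable canonical decomposition from \cite{mitra2016} (a sequence of $M$ nested Kalman observable decompositions, each performed within the unobservable subspace left by the previous sensors), and your recursion on the subspaces $\mathcal{W}_i$, together with the invariance and staircase bookkeeping, is exactly that construction carried out in detail. The only discrepancy is a harmless convention flip: with the paper's definitions $\bar{\mathbf{A}}_{D}=\mathcal{T}_{D}^{-1}\mathbf{A}_{D}\mathcal{T}_{D}$ and $\bar{\mathbf{C}}_{D_i}=\mathbf{C}_{D_i}\mathcal{T}_{D}$, the matrix whose columns are the adapted basis vectors of $\mathcal{O}_1,\dots,\mathcal{O}_M,\mathcal{W}_M$ should be called $\mathcal{T}_D$ rather than $\mathcal{T}_D^{-1}$.
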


Referring to Proposition \ref{transformations}, we use the matrix $\mathcal{T}_{D}$ to perform the coordinate transformation $\boldsymbol{\phi}[k]=\mathcal{T}_{D}\mathbf{z}[k]$ to obtain
\begin{eqnarray}
\mathbf{z}[k+1]&=&\bar{\mathbf{A}}_{D}\mathbf{z}[k], \nonumber\\
\bar{\mathbf{y}}_i[k]&=&\bar{\mathbf{C}}_{D_i}\mathbf{z}[k], \quad \forall i \in \{1, \cdots, M\},
\label{eqn:transformed_dyn}
\end{eqnarray}
where $\bar{\mathbf{A}}_{D}={\mathcal{T}_{D}}^{-1}\mathbf{A}_{D}\mathcal{T}_{D}$  and $\bar{\mathbf{C}}_{D_i} = \mathbf{C}_{D_i}\mathcal{T}_{D} = \begin{bmatrix} \mathbf{C}_{i1} & \mathbf{C}_{i2} & \cdots & \mathbf{C}_{i(i-1)} & \mathbf{C}_{ii}  & \multicolumn{1}{|c}{\mathbf{0}}\end{bmatrix}$ are given by (\ref{eqn:gen_form}). The vector $\mathbf{z}[k]$ assumes the structure $\mathbf{z}[k]={\begin{bmatrix}
{{\mathbf{z}}^{(1)}}^{T}[k]&
\cdots&
{{\mathbf{z}}^{(M)}}^{T}[k]&
\mathbf{z}^{T}_{\mathcal{U}}[k]\end{bmatrix}}^{T}$ commensurate with the structure of $\bar{\mathbf{A}}_{D}$ in (\ref{eqn:gen_form}).
We refer to $\mathbf{z}^{(j)}_{}[k] \in {\mathbb{R}}^{o_j}$ as the $j$-th sub-state, and to $\mathbf{z}_{\mathcal{U}}[k]$ as the unobservable sub-state of $\mathbf{z}[k]$ since it represents the unobservable portion of the state with respect to the pair $(\mathbf{A}_{D},\mathbf{C}_{D})$.  The multi-sensor observable canonical decomposition leads to a one-to-one correspondence between a node $j\in\mathcal{S}^{\star}$ and its associated sub-state $\mathbf{z}^{(j)}_{}[k]$. Accordingly, node $j\in\mathcal{S}^{\star}$ is viewed as the source of information of its corresponding sub-state $\mathbf{z}^{(j)}_{}[k]$, and is tasked with the responsibility of estimating $\mathbf{z}^{(j)}_{}[k]$ and leading the consensus dynamics corresponding to $\mathbf{z}^{(j)}_{}[k]$. Thus, for each of the $M$ sub-states, we have a unique source of information within the functional leader set $\mathcal{S}^{\star}$. Our estimation strategy is detailed below.

Based on equations (\ref{eqn:gen_form}) and (\ref{eqn:transformed_dyn}), observe that the dynamics of the $i$-th sub-state are governed by 
\begin{equation}
\begin{split}
\mathbf{z}^{(i)}_{}[k+1]=\mathbf{A}_{ii}\mathbf{z}^{(i)}_{}[k]+\sum_{j=1}^{i-1}\mathbf{A}_{ij}\mathbf{z}^{(j)}_{}[k],\\
\bar{\mathbf{y}}_i[k]=\mathbf{C}_{ii}\mathbf{z}^{(i)}_{}[k]+\sum_{j=1}^{i-1}\mathbf{C}_{ij}\mathbf{z}^{(j)}_{}[k].
\end{split}
\label{eqn:gendyn}
\end{equation}
Note that the unobservable sub-state $\mathbf{z}_{\mathcal{U}}[k]$ evolves as
\begin{equation}
\mathbf{z}_{\mathcal{U}}[k+1]=\mathbf{A}_{\mathcal{U}}\mathbf{z}_{\mathcal{U}}[k]+\sum_{j=1}^{M}\mathbf{A}_{j}\mathbf{z}^{(j)}_{}[k].
\label{eqn:unobs_state}
\end{equation}
Define $\hat{\mathbf{z}}^{(j)}_{i}[k]$ as the estimate of the $j$-th sub-state maintained by the $i$-th node in $\mathcal{V}$. The $i$-th  functional leader adopts the following policy: it uses a Luenberger-style update rule for updating its associated sub-state $\hat{\mathbf{z}}^{(i)}_{i}[k]$, and a consensus based scheme for updating all other sub-states $\hat{\mathbf{z}}^{(j)}_{i}[k]$, where $ j \in \{1, \cdots ,M\} \setminus\{i\}.$ Based on the dynamics (\ref{eqn:gendyn}), the Luenberger observer at node $i\in\mathcal{S}^{\star}$ is constructed as 
\begin{equation}
\begin{split}
\hat{\mathbf{z}}^{(i)}_{i}[k+1]&=\mathbf{A}_{ii}\hat{\mathbf{z}}^{(i)}_{i}[k]+\sum_{j=1}^{i-1}\mathbf{A}_{ij}\hat{\mathbf{z}}^{(j)}_{i}[k]\\
&\hspace{2mm}+\mathbf{G}_i(\bar{\mathbf{y}}_i[k]-(\mathbf{C}_{ii}\hat{\mathbf{z}}^{(i)}_{i}[k]+\sum_{j=1}^{i-1}\mathbf{C}_{ij}\hat{\mathbf{z}}^{(j)}_{i}[k])),\\
\end{split}
\label{eqn:luen}
\end{equation}
where $\mathbf{G}_i \in {\mathbb{R}}^{o_i \times t_i}$ is a gain matrix which needs to be designed ($\mathbf{z}^{(i)}[k] \in {\mathbb{R}}^{o_i}$ and $\bar{\mathbf{y}}_i[k]\in\mathbb{R}^{t_i}$).
For estimation of the $j$-th sub-state, where $j \in \{1, \cdots ,M\}\setminus\{i\}$, the $i$-th node in $\mathcal{S}^{\star}$ employs the following consensus dynamics
\begin{equation}
\hat{\mathbf{z}}^{(j)}_{i}[k+1]=\underbrace{\mathbf{A}_{jj}\sum_{l\in\mathcal{N}_i}w^{j}_{il}\hat{\mathbf{z}}^{(j)}_{l}[k]}_{\hbox{consensus term}}+\underbrace{\sum_{l=1}^{j-1}\mathbf{A}_{jl}\hat{\mathbf{z}}^{(l)}_{i}[k]}_{\hbox{coupling term}},
\label{eqn:consensus}
\end{equation}
where $w^{j}_{il}$ is the weight the $i$-th node associates with the $l$-th node (where $l\in\mathcal{N}_i$) for the estimation of the $j$-th sub-state. Each node in $\mathcal{V}\setminus\mathcal{S}^{\star}$ runs a consensus rule identical to \eqref{eqn:consensus}, but for every sub-state $j\in\{1,\cdots,M\}$. The consensus weights are non-negative and satisfy
\begin{equation}
\sum_{l\in\mathcal{N}_i}w^{j}_{il}=1, \quad \forall j \in \{1, \cdots ,M\}.
\label{eqn:stochasticity}
\end{equation}
 Let $\hat{\mathbf{z}}_{i\mathcal{U}}[k]$ denote the estimate of the unobservable sub-state $\mathbf{z}_{\mathcal{U}}[k]$ maintained by the $i$-th node in $\mathcal{V}$. Mimicking equation (\ref{eqn:unobs_state}), each node $i\in\mathcal{V}$ updates $\hat{\mathbf{z}}_{i\mathcal{U}}[k]$ as
\begin{equation}
\hat{\mathbf{z}}_{i\mathcal{U}}[k+1]=\mathbf{A}_{\mathcal{U}}\hat{\mathbf{z}}_{i\mathcal{U}}[k]+\sum_{j=1}^{M}\mathbf{A}_{j}\hat{\mathbf{z}}^{(j)}_{i}[k].
\label{eqn:unobsestimate}
\end{equation}
Summarily, the update equations (\ref{eqn:luen}), (\ref{eqn:consensus}) and (\ref{eqn:unobsestimate}) constitute the proposed distributed functional observer. Important to note is the fact that each of the $M$ functional leader nodes in $\mathcal{S}^{\star}$ maintain a Luenberger observer for estimating their corresponding sub-state and use consensus for the remaining $M-1$ sub-states of the vector $\mathbf{z}[k]$, whereas each non-leader node in $\mathcal{V}\setminus\mathcal{S}^{\star}$ runs consensus for every sub-state. All nodes run \eqref{eqn:unobsestimate} since there is no leader corresponding to the unobservable sub-state $\mathbf{z}_{\mathcal{U}}[k]$.
\section{Main Result}
In this section, we present our main result concerning the convergence analysis of the distributed functional observer developed in Section \ref{sec:Distributedfuncobs}.
To this end, we require the following Lemma; since the proof of this result essentially follows the same steps as \cite[Theorem 1]{mitra2016}, we skip minor details and present a sketch of the main idea.
\begin{lemma}
\label{lemma_main}
Consider the dynamics \eqref{eq:redmodel} where the pair $(\mathbf{A}_D,\mathbf{C}_D)$ is detectable, and let the communication graph $\mathcal{G}$ be strongly connected. Then, for each node $i \in \mathcal{S}^{\star}= \{1,2, \cdots, M\}$, there exists a choice of observer gain matrix $\mathbf{G}_i$, and consensus weights $w^{j}_{il}$, $j \in \{1,2, \cdots, M\}\setminus\{i\}$, $l \in \mathcal{N}_i$, and for each node $i\in\mathcal{V}\setminus\mathcal{S}^{\star}$, there exists a choice of consensus weights $w^{j}_{il}$, $j \in \{1,2, \cdots, M\}$, $l\in\mathcal{N}_i$, such that the update equations (\ref{eqn:luen}), (\ref{eqn:consensus}) and (\ref{eqn:unobsestimate}) guarantee asymptotic reconstruction of $\boldsymbol{\phi}[k]$ at each node in $\mathcal{V}$.
\end{lemma}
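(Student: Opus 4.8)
The overall strategy is to stack the estimation errors across all nodes and all sub-states, show the resulting error dynamics is block-triangular in the sub-state index $j$, and then handle the blocks one at a time, from $j=1$ upward, using an induction that peels off one ``already-converged'' sub-state at each stage. Define, for each $j\in\{1,\dots,M\}$, the error $\boldsymbol{\epsilon}^{(j)}_i[k]=\hat{\mathbf{z}}^{(j)}_i[k]-\mathbf{z}^{(j)}[k]$, and stack over $i\in\mathcal{V}$ to get $\boldsymbol{\epsilon}^{(j)}[k]$; similarly define $\boldsymbol{\epsilon}_{\mathcal{U}}[k]$ for the unobservable sub-state. Subtracting the true dynamics \eqref{eqn:gendyn} from the estimator updates \eqref{eqn:luen}, \eqref{eqn:consensus} and using the column-of-zeros structure on the right in \eqref{eqn:gen_form} (which forces all the $\mathbf{A}_{ij},\mathbf{C}_{ij}$ coupling terms to involve only indices $l<j$), one gets that $\boldsymbol{\epsilon}^{(j)}[k+1]$ depends only on $\boldsymbol{\epsilon}^{(1)}[k],\dots,\boldsymbol{\epsilon}^{(j)}[k]$ — and moreover the coupling from $\boldsymbol{\epsilon}^{(l)}[k]$, $l<j$, enters in the \emph{same way} at every node (the coupling term $\sum_{l<j}\mathbf{A}_{jl}\hat{\mathbf{z}}^{(l)}_i[k]$ appears identically in the true update and the estimator update), so once $\boldsymbol{\epsilon}^{(l)}[k]\to\mathbf{0}$ for all $l<j$, the $j$-th error is driven by a \emph{homogeneous} recursion governed by a single matrix plus a vanishing input.

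The core of the argument is therefore the base case and the inductive step, which are structurally identical. Consider sub-state $j$ assuming $\boldsymbol{\epsilon}^{(l)}[k]\to\mathbf{0}$ for $l<j$. The error $\boldsymbol{\epsilon}^{(j)}[k]$, stacked over all $N$ nodes, evolves as $\boldsymbol{\epsilon}^{(j)}[k+1]=\boldsymbol{\Gamma}_j\,\boldsymbol{\epsilon}^{(j)}[k]+\mathbf{d}_j[k]$, where $\mathbf{d}_j[k]\to\mathbf{0}$ collects the vanishing coupling, and $\boldsymbol{\Gamma}_j$ has the structure: at the leader node $j$ itself, the dynamics are the Luenberger closed loop $\mathbf{A}_{jj}-\mathbf{G}_j\mathbf{C}_{jj}$ (no consensus dependence, since node $j$ uses \eqref{eqn:luen} for its own sub-state), while at every other node $i$ the dynamics are $\mathbf{A}_{jj}$ acting after a convex combination $\sum_{l\in\mathcal{N}_i}w^j_{il}$ of neighbors' estimates of sub-state $j$. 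This is precisely the single-sub-state situation analyzed in \cite[Theorem 1]{mitra2016}: since $(\mathbf{A}_{jj},\mathbf{C}_{jj})$ is observable (Proposition \ref{transformations}(i)), one chooses $\mathbf{G}_j$ so that $\mathbf{A}_{jj}-\mathbf{G}_j\mathbf{C}_{jj}$ has spectral radius strictly less than $1$; and because $\mathcal{G}$ is strongly connected it contains a spanning tree rooted at node $j$, along which the consensus weights $w^j_{il}$ can be chosen (e.g., each non-root node putting weight $1$ on its parent toward the root, and small weights elsewhere) so that the composite matrix $\boldsymbol{\Gamma}_j$ is Schur. The unobservable sub-state is handled last: $\boldsymbol{\epsilon}_{\mathcal{U}}[k+1]=\mathbf{A}_{\mathcal{U}}\boldsymbol{\epsilon}_{\mathcal{U}}[k]+\sum_j\mathbf{A}_j\,\boldsymbol{\epsilon}^{(j)}[k]$ (the coupling term in \eqref{eqn:unobsestimate} is identical across the true and estimated recursions), and since detectability of $(\mathbf{A}_D,\mathbf{C}_D)$ forces $\mathbf{A}_{\mathcal{U}}$ to be Schur (Proposition \ref{transformations}(ii) plus the decomposition), $\boldsymbol{\epsilon}_{\mathcal{U}}[k]$ is a Schur system driven by a vanishing input and hence converges to zero. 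Finally, convergence of $\hat{\mathbf{z}}_i[k]$ to $\mathbf{z}[k]$ at every node yields, via $\boldsymbol{\phi}[k]=\mathcal{T}_D\mathbf{z}[k]$, convergence of the reconstruction of $\boldsymbol{\phi}[k]$.

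Two routine facts make each peeling step go through and should be stated rather than re-derived: (a) a linear recursion $\mathbf{v}[k+1]=\boldsymbol{\Gamma}\mathbf{v}[k]+\mathbf{d}[k]$ with $\boldsymbol{\Gamma}$ Schur and $\mathbf{d}[k]\to\mathbf{0}$ satisfies $\mathbf{v}[k]\to\mathbf{0}$; (b) the weight-selection / gain-selection lemma from \cite{mitra2016} that, for an observable pair and a strongly connected graph, produces a Schur composite matrix of the ``one Luenberger node, consensus elsewhere'' form. I would simply invoke \cite[Theorem 1]{mitra2016} for (b), since the statement explicitly permits skipping details that mirror that proof. \textbf{The main obstacle} is bookkeeping rather than conceptual: one must verify carefully that the block-triangular structure of $\bar{\mathbf{A}}_D,\bar{\mathbf{C}}_D$ in \eqref{eqn:gen_form}, together with the fact that \emph{all} nodes (leaders and non-leaders alike) use the \emph{same} coupling term $\sum_{l<j}\mathbf{A}_{jl}\hat{\mathbf{z}}^{(l)}_i[k]$ and the \emph{same} $\mathbf{A}_{\mathcal{U}}$-update, makes the cross-sub-state coupling enter the stacked error dynamics only as an \emph{additive vanishing input} (never feeding back into already-converged sub-states), so that the triangular induction is legitimate and no algebraic loop is created between sub-states at different nodes. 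Once that structural observation is nailed down, the convergence claim for each sub-state is a direct appeal to \cite[Theorem 1]{mitra2016} and fact (a) above.
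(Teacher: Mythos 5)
Your proposal is correct and follows essentially the same route as the paper: stack the per-node errors for each sub-state, choose $\mathbf{G}_j$ via observability of $(\mathbf{A}_{jj},\mathbf{C}_{jj})$, restrict the consensus weights for sub-state $j$ to a spanning tree rooted at leader $j$, induct on $j$ treating the lower-indexed coupling as a vanishing (ISS) input, and finish with Schur stability of $\mathbf{A}_{\mathcal{U}}$ from detectability and the map $\boldsymbol{\phi}[k]=\mathcal{T}_D\mathbf{z}[k]$. The only cosmetic difference is that the paper sets the non-parent weights exactly to zero so that the consensus block is nilpotent (hence Schur irrespective of $\mathbf{A}_{jj}$), whereas your ``small weights elsewhere'' variant would additionally need a continuity argument.
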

\begin{proof} (\textit{Sketch}) Define $\mathbf{e}^{(j)}_{i}[k] \triangleq \hat{\mathbf{z}}^{(j)}_{i}[k] - \mathbf{z}^{(j)}_{}[k]$ as the error in estimation of the $j$-th sub-state of $\mathbf{z}[k]$ by the $i$-th node in $\mathcal{V}$. Let the index set $\{1,k_1, k_2, \cdots, k_{N-1}\}$ represent a topological ordering consistent with a spanning tree rooted at node 1 (the source of information/leader for sub-state 1); such a tree always exists since $\mathcal{G}$ is strongly connected. Consider the composite error vector (permuted to match the aforementioned topological ordering) for the estimation of sub-state $1$:
$\bar{\mathbf{E}}^{(1)}_{}[k]={\begin{bmatrix}{\mathbf{e}^{(1)}_{1}[k]}^{T}&
{\mathbf{e}^{(1)}_{k_1}[k]}^{T}&\cdots&{\mathbf{e}^{(1)}_{k_{N-1}}[k]}^{T}
\end{bmatrix}}^{T}=\begin{bmatrix}{\mathbf{e}^{(1)}_{1}[k]}^{T}&{{\tilde{\mathbf{E}}}^{(1)}[k]}^{T}\end{bmatrix}$.
Based on \eqref{eqn:gendyn}, \eqref{eqn:luen} and \eqref{eqn:consensus}, we obtain the following error dynamics
\begin{equation}
\left[\begin{array}{c}\mathbf{e}^{(1)}_{1}[k+1]\vspace{2mm}\\
\tilde{\mathbf{E}}^{(1)}_{}[k+1]\end{array}\right]=
\underbrace{\left[\begin{array}{cc} (\mathbf{A}_{11}-\mathbf{G}_{1}\mathbf{C}_{11}) & \mathbf{0} \\
\mathbf{W}^{1}_{21} \otimes \mathbf{A}_{11} & \mathbf{W}^{1}_{22} \otimes \mathbf{A}_{11} \end{array}\right]}_{\mathbf{M}_1}\left[\begin{array}{c}\mathbf{e}^{(1)}_{1}[k]\vspace{2mm}\\
\tilde{\mathbf{E}}^{(1)}_{}[k]\end{array}\right],
\label{eq:errsub1}
\end{equation}
where the matrix $\mathbf{W}^{1}=\begin{bmatrix}\mathbf{W}^{1}_{21} & \mathbf{W}^{1}_{22}\end{bmatrix}$ contains consensus weights defined by (\ref{eqn:consensus}). Notice that $sp(\mathbf{M}_1)=sp(\mathbf{A}_{11}-\mathbf{G}_{1}\mathbf{C}_{11}) \cup sp(\mathbf{W}^{1}_{22} \otimes \mathbf{A}_{11})$. Since the pair $(\mathbf{A}_{11}, \mathbf{C}_{11})$ is observable by construction, one can always find a $\mathbf{G}_1$ that stabilizes $(\mathbf{A}_{11}-\mathbf{G}_1\mathbf{C}_{11})$. Next, we impose the constraint that for the estimation of sub-state 1, non-zero consensus weights are assigned to only the branches of the spanning tree that is rooted at node $1$ and is consistent with the ordering $\{1,k_1, k_2, \cdots, k_{N-1}\}$, i.e., a node listens to only its parent in such a tree. In this way, $\mathbf{W}^1_{22}$ becomes lower triangular with eigenvalues equal to zero. Designing the consensus weights in such a way (without violating \eqref{eqn:stochasticity})    ensures that $\mathbf{W}^{1}_{22} \otimes \mathbf{A}_{11}$, and hence $\mathbf{M}_1$, are both Schur stable, implying $\lim_{k\to\infty} \bar{\mathbf{E}}^{(1)}_{}[k]=\mathbf{0}$; i.e., all nodes in $\mathcal{V}$ can asymptotically estimate sub-state $1$. 

Generalizing the previous analysis, for the estimation of sub-state $j\in\{1,\cdots,M\}$, we impose that a node in $\mathcal{V}$ assigns a non-zero consensus weight to only its parent in the spanning tree rooted at node $j\in\mathcal{S}^{\star}$ (node $j$ acts as the source of information for sub-state $j$). Inducting on the sub-state  number $j$ and using Input to State Stability (ISS) then ensures that every node in $\mathcal{V}$ can asymptotically estimate each of the $M$ sub-states of $\mathbf{z}[k]$. Finally, note that detectability of the pair $(\mathbf{A}_D,\mathbf{C}_D)$ implies that the matrix  $\mathbf{A}_{\mathcal{U}}$ featuring in equations \eqref{eqn:unobs_state} and \eqref{eqn:unobsestimate} is Schur stable. Based on this fact and our previous analysis, it is easy to see that each node in $\mathcal{V}$ can asymptotically estimate $\mathbf{z}_{\mathcal{U}}[k]$, and hence $\mathbf{z}[k]$ and $\boldsymbol{\phi}[k]$ where $\boldsymbol{\phi}[k]=\mathcal{T}_{D}\mathbf{z}[k]$.
\end{proof}
The following is the main result of the paper.
\begin{theorem}
\label{thm:main} Given a tuple $(\mathbf{A,C,L})$ described by equations \eqref{eqn:plant}, \eqref{eqn:Obsmodel}, and \eqref{eq:funcdef}, and a strongly connected communication graph $\mathcal{G}$ satisfying Assumption \ref{assump:graph}, let the feasible leader set $\mathcal{F}$ described by Definition \ref{defn:feasibleleader} be non-empty. Then, the proposed distributed functional observer described by the update equations \eqref{eqn:luen}, \eqref{eqn:consensus}, and \eqref{eqn:unobsestimate}  
solves Problem 1.
\end{theorem}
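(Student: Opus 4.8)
The plan is to assemble the theorem from the two structural results already established, namely Lemma~\ref{lemma:funcset} and Lemma~\ref{lemma_main}, and then transfer the convergence guarantee for $\boldsymbol{\phi}[k]$ back to the original functions $\boldsymbol{\psi}[k]$. First I would invoke the hypothesis that $\mathcal{F}$ is non-empty: as noted in the discussion following Definition~3, this guarantees that $\mathcal{M}$ is non-empty, so a functional leader set $\mathcal{S}^{\star}$ with its characterizing tuple $(\mathbf{C}^{\star},r^{\star})$ exists. Applying Lemma~\ref{lemma:funcset} to this $\mathcal{S}^{\star}$ yields the similarity transformation $\mathbf{T}$ bringing $(\mathbf{A},\mathbf{C}^{\star})$ to the form \eqref{eq:transform1}, with the two key properties: the state $\boldsymbol{\phi}[k]=\mathbf{\Sigma}\mathbf{x}[k]$ governed by \eqref{eq:redmodel} has $\boldsymbol{\psi}[k]$ as its first $r$ components, and the pair $(\mathbf{A}_D,\mathbf{C}_D)$ is detectable.

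Next I would feed the reduced model \eqref{eq:redmodel} into Lemma~\ref{lemma_main}. Its hypotheses are exactly met: $(\mathbf{A}_D,\mathbf{C}_D)$ is detectable by Lemma~\ref{lemma:funcset}(ii), and $\mathcal{G}$ is strongly connected by Assumption~\ref{assump:graph}. Hence there exist observer gains $\mathbf{G}_i$ for $i\in\mathcal{S}^{\star}$ and consensus weights $w^{j}_{il}$ (satisfying the stochasticity constraint \eqref{eqn:stochasticity}) for all nodes, such that the update equations \eqref{eqn:luen}, \eqref{eqn:consensus}, \eqref{eqn:unobsestimate} drive $\hat{\mathbf{z}}_i[k]\to\mathbf{z}[k]$, and therefore $\boldsymbol{\phi}_i[k]=\mathcal{T}_D\hat{\mathbf{z}}_i[k]\to\boldsymbol{\phi}[k]$ at every node $i\in\mathcal{V}$.

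Finally I would close the loop back to Problem~1. Since $\boldsymbol{\phi}[k]=\mathbf{\Sigma}\mathbf{x}[k]$ and $\mathbf{\Sigma}={\begin{bmatrix}\mathbf{L}^T&\tilde{\mathbf{C}^{\star}}^T\end{bmatrix}}^T$, the first $r$ entries of $\boldsymbol{\phi}[k]$ coincide with $\boldsymbol{\psi}[k]=\mathbf{L}\mathbf{x}[k]$. Each node $i$ therefore sets $\hat{\boldsymbol{\psi}}_i[k]\triangleq\begin{bmatrix}\mathbf{I}_r&\mathbf{0}\end{bmatrix}\hat{\boldsymbol{\phi}}_i[k]$, where $\hat{\boldsymbol{\phi}}_i[k]=\mathcal{T}_D\hat{\mathbf{z}}_i[k]$, and then $\|\hat{\boldsymbol{\psi}}_i[k]-\boldsymbol{\psi}[k]\|\le\|\hat{\boldsymbol{\phi}}_i[k]-\boldsymbol{\phi}[k]\|\to0$, which is precisely the statement of Problem~1. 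I would also remark that the node-level implementation is well-posed: every node can compute the fixed matrices $\mathbf{A}_{ij},\mathbf{C}_{ij},\mathbf{A}_j,\mathbf{A}_{\mathcal{U}},\mathcal{T}_D$ offline, and the information exchanged across each edge is only the relevant sub-state estimates.

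Since both lemmas are already in hand, there is no genuine obstacle; the theorem is essentially a bookkeeping composition. The one point requiring a little care is confirming that the measurements $\bar{\mathbf{y}}_i[k]=\mathbf{C}_{D_i}\boldsymbol{\phi}[k]$ used by a functional leader node $i\in\mathcal{S}^{\star}$ in \eqref{eqn:luen} are genuinely available to that node — i.e.\ that they are reconstructible from the physical measurement $\mathbf{y}_i[k]=\mathbf{C}_i\mathbf{x}[k]$ it actually receives. This holds because, by construction of $\mathbf{C}^{\star}$ as a row sub-matrix of $\mathbf{C}_{\mathcal{S}^{\star}}$ partitioned as $\mathbf{C}_D={\begin{bmatrix}\mathbf{C}_{D_1}^T&\cdots&\mathbf{C}_{D_M}^T\end{bmatrix}}^T$, the rows of $\mathbf{C}_{D_i}\mathbf{\Sigma}=\mathbf{C}^{\star}$-block lie in $\mathcal{R}(\mathbf{C}_i)$, so $\bar{\mathbf{y}}_i[k]$ is a fixed linear function of $\mathbf{y}_i[k]$ that node $i$ can form locally. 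I would state this observation explicitly and then conclude.
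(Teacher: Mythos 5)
Your proposal is correct and follows essentially the same route as the paper's own proof: invoke the non-emptiness of $\mathcal{F}$ to obtain $\mathcal{S}^{\star}$, use Lemma~\ref{lemma:funcset} for detectability of $(\mathbf{A}_D,\mathbf{C}_D)$, apply Lemma~\ref{lemma_main} to reconstruct $\boldsymbol{\phi}[k]$ at every node, and read off $\boldsymbol{\psi}[k]=\begin{bmatrix}\mathbf{I}_r&\mathbf{0}\end{bmatrix}\boldsymbol{\phi}[k]$. Your added check that $\bar{\mathbf{y}}_i[k]$ is locally computable from $\mathbf{y}_i[k]$ is a sensible, correct bit of extra bookkeeping that the paper leaves implicit.
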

\begin{proof}
Since $\mathcal{F}$ is non-empty, there exists a functional leader set $\mathcal{S}^{\star}$. Based on the property of $\mathcal{S}^{\star}$ described by Lemma \ref{lemma:funcset}, the pair $(\mathbf{A}_{D},\mathbf{C}_{D})$ governing the dynamics of $\boldsymbol{\phi}[k]$ (see \eqref{eq:redmodel}) is detectable. Since $\mathcal{G}$ is strongly connected, it then follows from Lemma \ref{lemma_main} that each  node in $\mathcal{V}$ can asymptotically estimate $\boldsymbol{\phi}[k]$. Noting that the desired functions $\boldsymbol{\psi}[k]$ satisfy $\boldsymbol{\psi}[k]=\begin{bmatrix} \mathbf{I}_{r}&\mathbf{0}\end{bmatrix}\boldsymbol{\phi}[k]$ completes the proof.
\end{proof}
\begin{remark} \label{rem:order} Note that the order of the proposed distributed functional observer is $r^{\star}$ where $r\leq r^{\star} \leq d$ in general, with $r=\textrm{rank}\hspace{1mm}\mathbf{L}$ and $d$ equal to the dimension of the detectable subspace of the pair $(\mathbf{A},\mathbf{C})$ given by \eqref{eqn:plant} and \eqref{eqn:Obsmodel}.\footnote{Note that if $\mathcal{F}$ is non-empty, then a centralized functional observer of order $r$ can always be constructed.} The fact that $r^{\star}\geq r$ follows from the discussion in Section \ref{sec:motivation}. For the special case when $\mathcal{R}(\mathbf{C}^{\star})\subseteq\mathcal{R}(\mathbf{L})$, we have $r^{\star}=r$. Further, when the tuple $(\mathbf{A,C,L})$ is `functionally observable' \cite{func1}, i.e., when the functions of interest are linear combinations of only the observable states of $(\mathbf{A,C})$, it is easily seen that the order of the proposed observer is no greater than the dimension of the observable subspace of $(\mathbf{A,C})$.
\end{remark}  
\begin{remark} Note that when $\mathbf{L}=\mathbf{I}_{n}$, the rank condition \eqref{eq:rankcond1} is trivially satisfied whereas \eqref{eq:detectcond} boils down to the existence of a set of nodes $\mathcal{S}\in\mathcal{V}$ such that the pair $(\mathbf{A},\mathbf{C}_{\mathcal{S}})$ is detectable. For a strongly connected graph $\mathcal{G}$, it was shown in \cite{mitra2016,wang,park} that the necessary and sufficient condition for  distributed state estimation is the detectability of the pair $(\mathbf{A,C})$. Thus, it is apparent that for a strongly connected graph $\mathcal{G}$, the sufficient condition presented in this paper for the construction of a distributed functional observer, namely that the feasible leader set $\mathcal{F}$ is non-empty, is in fact a generalization of the aforementioned necessary and sufficient condition for distributed state estimation.
\end{remark}

\section{Conclusion}
We studied the problem of designing distributed functional observers for LTI systems. Our work was motivated by the observation that existing results/techniques for designing centralized functional observers are not directly applicable in a distributed setting. To solve the problem considered in this paper, we introduced the notion of `functional leader nodes' and showed that such nodes play a key role in our proposed functional estimation strategy. We established that under certain conditions on the system dynamics and network structure, our method guarantees asymptotic reconstruction of the functions of interest at every sensor node. 

By effectively transforming the distributed functional estimation problem into a distributed state estimation problem, the technique developed in this paper presents various opportunities for extensions. For example, in the case where some nodes in the network are malicious or faulty, one can potentially apply the algorithm from our recent work on secure distributed observers \cite{mitraCDC}.

There are several interesting directions of future research. As pointed out earlier, the problem of defining and subsequently designing a minimal order distributed functional observer remains open. Furthermore, as a generalization of the problem studied in this paper, one might consider a scenario where different clusters of nodes are interested in estimating different sets of functions.

\bibliographystyle{unsrt} \bibliography{refs}

\begin{thebibliography}{10}

\bibitem{olfati1}
Reza Olfati-Saber.
\newblock Distributed {K}alman filter with embedded consensus filters.
\newblock In {\em {P}roceedings of the 44th IEEE Conference on Decision and
  Control and European Control Conference}, pages 8179--8184, 2005.

\bibitem{olfati3}
Reza Olfati-Saber.
\newblock Distributed {K}alman filtering for sensor networks.
\newblock In {\em {P}roceedings of the 46th IEEE Conference on Decision and
  Control}, pages 5492--5498, 2007.

\bibitem{infinite1}
Usman Khan and Jos{\'e}~MF Moura.
\newblock Distributing the {K}alman filter for large-scale systems.
\newblock {\em IEEE Transactions on Signal Processing}, 56(10):4919--4935,
  2008.

\bibitem{Khanobs2}
Usman~A Khan and Ali Jadbabaie.
\newblock Collaborative scalar-gain estimators for potentially unstable social
  dynamics with limited communication.
\newblock {\em Automatica}, 50(7):1909--1914, 2014.

\bibitem{park}
Shinkyu Park and Nuno~C Martins.
\newblock Design of distributed {LTI} observers for state omniscience.
\newblock {\em IEEE Transactions on Automatic Control}, 2016.

\bibitem{wang}
L~Wang and AS~Morse.
\newblock A distributed observer for a time-invariant linear system.
\newblock {\em arXiv preprint arXiv:1609.05800}, 2016.

\bibitem{mitra2016}
Aritra Mitra and Shreyas Sundaram.
\newblock An approach for distributed state estimation of {LTI} systems.
\newblock In {\em {P}roceedings of the 2016 54th Annual Allerton Conference on
  Communication, Control, and Computing}, pages 1088--1093, 2016.

\bibitem{darouach}
Mohamed Darouach.
\newblock Existence and design of functional observers for linear systems.
\newblock {\em IEEE Transactions on Automatic Control}, 45(5):940--943, 2000.

\bibitem{trinh}
Tyrone~Lucius Fernando, Hieu~Minh Trinh, and Les Jennings.
\newblock Functional observability and the design of minimum order linear
  functional observers.
\newblock {\em IEEE Transactions on Automatic Control}, 55(5):1268--1273, 2010.

\bibitem{funcdist}
Wei~Yin Leong, Hieu Trinh, and T~Fernando.
\newblock A practical functional observer scheme for interconnected time-delay
  systems.
\newblock {\em International Journal of Control}, 88(10):1963--1973, 2015.

\bibitem{func1}
Tyrone Fernando, Less Jennings, and Hieu Trinh.
\newblock Functional observability.
\newblock In {\em 2010 5th International Conference on Information and
  Automation for Sustainability}, pages 421--423, 2010.

\bibitem{func2}
Les~S Jennings, Tyrone~Lucius Fernando, and Hieu~Minh Trinh.
\newblock Existence conditions for functional observability from an eigenspace
  perspective.
\newblock {\em IEEE Transactions on Automatic Control}, 56(12):2957--2961,
  2011.

\bibitem{mitraCDC}
Aritra Mitra and Shreyas Sundaram.
\newblock Secure distributed observers for a class of linear time invariant
  systems in the presence of byzantine adversaries.
\newblock In {\em {P}roceedings of the 55th IEEE Conference on Decision and
  Control}, pages 2709--2714, 2016.

\end{thebibliography}
\end{document}